\newtheorem{thm}{Theorem}
\newtheorem{lem}[thm]{Lemma}
\newtheorem{rem}[thm]{Remark}
\newtheorem{defn}[thm]{Definition}
\newcommand{\N}{\mathbb{N}}
\newcommand{\R}{\mathbb{R}}
\begin{document}

\title{Some remarks about The Morse-Sard theorem and approximate differentiability}

\author{Daniel Azagra}
\address{ICMAT (CSIC-UAM-UC3-UCM), Departamento de An{\'a}lisis Matem{\'a}tico,
Facultad Ciencias Matem{\'a}ticas, Universidad Complutense, 28040, Madrid, Spain }
\email{azagra@mat.ucm.es}

\author{Miguel Garc\'ia-Bravo}
\address{ICMAT (CSIC-UAM-UC3-UCM), Calle Nicol\'as Cabrera 13-15.
28049 Madrid SPAIN}
\email{miguel.garcia@icmat.es}

\date{April 25, 2017}

\keywords{Approximate differentiability, Morse-Sard theorem}

\begin{abstract}
Let $n, m$ be positive integers, $n\geq m$. We make several remarks on the relationship between approximate differentiability of higher order and Morse-Sard properties. For instance, among other things we show that if a function $f:\mathbb{R}^n\to\mathbb{R}^m$ is locally Lipschitz and is approximately differentiable of order $i$ almost everywhere with respect to the Hausdorff measure $\mathcal{H}^{i+m-2}$, for every $i=2, \dots, n-m+1$, then $f$ has the Morse-Sard property (that is to say, the image of the critical set of $f$ is null with respect to the Lebesgue measure in $\R^m$).
\end{abstract}

\maketitle

\section{Introduction}

The Morse-Sard theorem \cite{Mo, Sa} states that if $f:\R^n\to\R^m$ is of class $C^k$, where $k=\max\{n-m+1, 1\}$, then the set of critical values of $f$ has measure zero in $\R^m$. A celebrated example of Whitney's \cite{Whitney1} shows that this classical result is sharp within the classes of functions $C^j$. Given the crucial applications of the Morse-Sard theorem in several branches of mathematics, it is nonetheless natural and useful to try and refine the Morse-Sard theorem for other classes of functions, and by now there is a rich literature in this line of work. We cannot mention and comment on all of the important contributions generated by this problem; instead we shall content ourselves with referring the reader to \cite{Yom, No, Ba, Pa, Fi, Bo1, Bo2, KK1, KK2, HaZi, Hkk, Az2} and the references therein.

In this note we will show how, by combining some of the strategies and tools which are common to several of these with the idea of the proof of \cite[Theorem 1]{Liu} and an induction argument, one can obtain the following result: let $n\geq m$ and $f:\mathbb{R}^n\to\mathbb{R}^m$ be a Borel function. Suppose that $f$ is approximately differentiable of order $1$ at $\mathcal{H}^m-$almost every point and satisfies
\begin{enumerate}\item[(a)] $\text{ap}\limsup_{y\rightarrow x}{\vert f(y)-f(x)\vert\over \vert y-x\vert}<+\infty$ for all $x\in\mathbb{R}^n\setminus N_0$, where $N_0$ is a countable set, and
\item[(b)] $\text{ap}\lim_{y\rightarrow x}{\vert f(y)-f(x)-\cdots {F_i(x)\over i!}(y-x)^i\vert\over \vert y-x\vert^{i}}=0$ for all $i=2,\dots, n-m+1$ and for all $x\in\mathbb{R}^n\setminus N_i$, where each set $N_i$ is $(i+m-2)-$sigmafinite and the coefficients $F_i(x)$ are Borel functions,
\end{enumerate}
then $f$ has the Morse-Sard property (that is to say, the image of the critical set of $f$ is null with respect to the Lebesgue measure in $\R^m$). See Theorem \ref{p} in Section 3 below for a precise statement and proof. In Theorem \ref{i} we are able to dispense with the condition about the Borel measurability of the functions but we must strengthen conditions $(b)$ above by replacing the $(s)-$sigmafinite exceptional sets with countably $(\mathcal{H}^s,s)$ rectifiable sets of certain classes $C^k$. In Theorem \ref{o} we provide an interesting variant of this result. See Sections 2 and 3 for auxiliary results and definitions. Theorem \ref{i} and Theorem \ref{p} generalize the versions of the Morse-Sard theorem provided by Bates's theorem and the Appendix of \cite{Az2}, and are not stronger, nor weaker, than the versions of \cite{Bo2, KK1, KK2} for $BV_n$ or Sobolev functions with smaller exponents; see Section 4 below for examples and further comments.

\section{Preliminaries}

Recall that a modulus of continuity is a concave, increasing function $\omega:[0,\infty)\to [0, \infty)$ such that $\omega(0^{+})=0$. Given a positive integer $k$, for a fixed modulus of continuity $\omega$ the class $C^{k,\omega}(\R^n; \R)$ is defined as the set of functions which are $k$ times differentiable and its partial derivatives are uniformly continuous with modulus of continuity $\omega$. In the particular case $\omega (s)=s^{t}$ for some $t\in (0,1]$ we will write $C^{k,t}(\mathbb{R}^n;\mathbb{R})$.\\

A fundamental tool in our proofs of Theorem \ref{i} and Theorem \ref{o} will be the following version of the Whitney extension theorem, see \cite{Glaeser, Mal, St}.

\begin{thm}[Uniform version of Whitney Extension Theorem]\label{whi}
Let $\omega$ be a modulus of continuity.
Let $C$ be a subset of $\R^n$ and $\{ f_\alpha \}_{|\alpha|\leq k}$ be a family of real valued functions defined on $C$ satisfying
\begin{equation} \label{derivadas de whitney}
f_\alpha(x)= \sum_{|\beta| \leq k-|\alpha|} \frac{f_{\alpha+\beta}(y)}{\beta !} (x-y)^\beta
 + R_\alpha(x,y)
\end{equation}
for all $x,y \in  C$ and all multi-indices $\alpha$ with $|\alpha| \leq k.$ Suppose that for some constant $M>0$ we have
 \begin{equation}\label{condicion whitney}
 |f_\alpha(x)| \leq M, \textrm{ and } \quad |R_\alpha(x,y)|\leq M |x-y|^{k-|\alpha|}\omega(|x-y|) \quad\ \text{for all} \quad x,y \in C \textrm{ and all } |\alpha| \leq k.
 \end{equation}
 Then there exists a function $F:\R^n \to \R$ such that:
 \begin{itemize}
  \item[(i)] $F\in C^{k,\omega}(\R^n; \R).$
 \item[(ii)] $D^\alpha F = f_\alpha$ on $C$ for all $|\alpha| \leq k$.
 \end{itemize}
\end{thm}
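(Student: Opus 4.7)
The plan is to follow Whitney's classical construction, tracking how the modulus $\omega$ in hypothesis \eqref{condicion whitney} propagates into the final derivatives. First, by extending each $f_\alpha$ continuously to $\overline{C}$ (which \eqref{derivadas de whitney}--\eqref{condicion whitney} permits unambiguously) one may assume $C$ is closed in $\R^n$. A Whitney decomposition then writes $\R^n\setminus C$ as a locally finite union of closed dyadic cubes $\{Q_j\}$ with $\mathrm{diam}(Q_j)$ comparable to $\mathrm{dist}(Q_j,C)$, and one fixes an associated smooth partition of unity $\{\varphi_j\}$ subordinate to a fixed dilation of these cubes, satisfying the standard bounds $|D^\alpha\varphi_j|\leq A_\alpha\,\mathrm{diam}(Q_j)^{-|\alpha|}$.

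For each $Q_j$ pick a nearly-closest point $p_j\in C$ and form the degree-$k$ Taylor polynomial
\begin{equation*}
P_j(x)=\sum_{|\alpha|\leq k}\frac{f_\alpha(p_j)}{\alpha!}(x-p_j)^\alpha.
\end{equation*}
The candidate extension is $F(x)=f_0(x)$ on $C$ and $F(x)=\sum_j\varphi_j(x)P_j(x)$ on $\R^n\setminus C$. Since the sum is locally finite, $F$ is $C^{\infty}$ on $\R^n\setminus C$. For the core estimate, fix a Whitney cube $Q_{j_0}$ and, using $\sum_j\varphi_j\equiv 1$, write $F-P_{j_0}=\sum_j\varphi_j(P_j-P_{j_0})$. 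Applying \eqref{condicion whitney} to the pair $(p_j,p_{j_0})$ one obtains
\begin{equation*}
|D^\beta(P_j-P_{j_0})(x)|\leq C\,M\,|p_j-p_{j_0}|^{k-|\beta|}\,\omega(|p_j-p_{j_0}|),\qquad |\beta|\leq k,
\end{equation*}
and combining this with $|p_j-p_{j_0}|\lesssim\mathrm{diam}(Q_{j_0})$ and the bounds on $D^\alpha\varphi_j$ yields
\begin{equation*}
\bigl|D^\alpha F(x)-D^\alpha P_{j_0}(x)\bigr|\leq C'\,M\,\mathrm{dist}(x,C)^{k-|\alpha|}\,\omega\bigl(\mathrm{dist}(x,C)\bigr),\qquad |\alpha|\leq k.
\end{equation*}
Letting $x\to x_0\in C$ forces $D^\alpha F$ to extend continuously with value $f_\alpha(x_0)$, so $F\in C^{k}(\R^n)$ and property (ii) holds.

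The main obstacle is the uniform $\omega$-modulus of continuity of $D^\alpha F$ for $|\alpha|=k$. This is handled by a case analysis on $x,y\in\R^n$: if $x,y$ lie in a common enlarged Whitney cube $Q_{j_0}$ the preceding inequality combined with the fact that $D^k P_{j_0}$ is constant yields the bound; if $|x-y|\gtrsim\max\{\mathrm{dist}(x,C),\mathrm{dist}(y,C)\}$ one interpolates through nearby points of $C$ and applies \eqref{condicion whitney} directly (when $x,y\in C$) or in combination with the previous estimate (in the mixed case). The delicate point is to verify that in every regime a single bound
\begin{equation*}
|D^\alpha F(x)-D^\alpha F(y)|\leq C''\,M\,\omega(|x-y|),\qquad |\alpha|=k,
\end{equation*}
holds with a constant depending only on $n$, $k$ and the Whitney geometry; concavity of $\omega$ is used critically to absorb factors like $\omega(\mathrm{dist}(x,C))\leq\omega(|x-y|)$ whenever $\mathrm{dist}(x,C)\leq|x-y|$. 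For the complete verification we refer to \cite{Glaeser, Mal, St}.
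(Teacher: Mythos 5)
Your outline is correct and matches what the paper does: the paper does not prove this theorem but cites \cite{Glaeser, Mal, St}, adding only the remark that one may reduce to closed $C$ by extending the functions $f_\alpha$ uniquely to $\overline{C}$ while preserving \eqref{derivadas de whitney} and \eqref{condicion whitney}, which is exactly your first step. Your sketch of the classical Whitney construction (cubes, partition of unity, Taylor polynomials at near-points, and the $\omega$-tracking estimates) is the standard argument from those same references, so there is nothing to object to.
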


Our notation with multi-indices is the standard one (see e.g. \cite[p. 2]{Zie2}).
This version of the Whitney extension theorem is usually stated for {\em closed} subsets $C$ of $\mathbb{R}^n$, but it is easily checked that Theorem \ref{whi} also holds for arbitrary subsets $C\subset\mathbb{R}^n$, because a modification of the usual argument showing that an uniformly continuous function defined on a set $D$ has a unique uniformly continuous extension (with the same modulus of continuity) to the closure $\overline{D}$ of $D$, together with conditions \eqref{derivadas de whitney} and \eqref{condicion whitney}, imply that if $C$ is not closed then the functions $f_{\alpha}$ have unique extensions to $\overline{C}$ that also satisfy \eqref{derivadas de whitney} and \eqref{condicion whitney} on $\overline{C}$. The theorem also remains true if we replace the target space $\R$ with $\R^m$, as one can apply the above result to the coordinate functions of $f=(f^1,\dots, f^m)$. In our proofs of Theorem \ref{i} and Theorem \ref{o} we will use this version of the Whitney extension theorem in the particular instances of $\omega(s)=s$ (thus obtaining extensions of class $C^{k, 1}$), or $\omega(s)=s^{t}$, with $0<t<1$ (in which case we will have extensions belonging to the H\"older differentiability classes $C^{k,t}$).

We will also use Whitney's original theorem for $C^k$, which we next restate for the reader's convenience.

\begin{thm}[Whitney Extension Theorem]\label{whi2}
Let $C\subset\mathbb{R}^{n}$ be closed. A necessary and sufficient condition, for a function $f: C\to\R$ and a family of functions
$\{ f_\alpha \}_{|\alpha|\leq k}$ defined on $C$ satisfying $f=f_0$ and
$$
f_\alpha(x)= \sum_{|\beta| \leq k-|\alpha|} \frac{f_{\alpha+\beta}(y)}{\beta !} (x-y)^\beta
 + R_\alpha(x,y)
$$
for all $x,y \in  C$ and all multi-indices $\alpha$ with $|\alpha| \leq k$, to admit a $C^k$ extension $F$ to all of $\R^n$ such that $D^\alpha F =f_\alpha$ on $C$ for all $|\alpha| \leq k$, is that
$$
  \lim_{|x-y| \to 0} \frac{R_\alpha(x,y)}{|x-y|^{m-|\alpha|}} =0 \eqno(W^{m})
$$
uniformly on compact subsets of $C$, for every $|\alpha| \leq k$.
\end{thm}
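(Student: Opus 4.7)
The approach is the classical Whitney construction: build the extension as a partition-of-unity average over Whitney cubes of the prescribed Taylor polynomials, then use $(W^k)$ to control the errors. Since $C$ is closed, I would take a Whitney decomposition $\{Q_i\}$ of $U:=\R^n\setminus C$ with $\mathrm{diam}(Q_i)\simeq\mathrm{dist}(Q_i,C)$, enlarge slightly to $\{Q_i^{*}\}$ with bounded overlap, and let $\{\varphi_i\}$ be a smooth partition of unity subordinate to $\{Q_i^{*}\}$ with $|D^\gamma\varphi_i(x)|\le A_\gamma\,\mathrm{dist}(x,C)^{-|\gamma|}$. For each $i$ choose $p_i\in C$ with $|x-p_i|\lesssim\mathrm{dist}(x,C)$ for every $x\in Q_i^{*}$, set $P_p(x):=\sum_{|\alpha|\le k}\frac{f_\alpha(p)}{\alpha!}(x-p)^\alpha$, and define
\begin{equation*}
F(x):=\begin{cases} f(x), & x\in C,\\ \sum_i\varphi_i(x)\,P_{p_i}(x), & x\in U,\end{cases}
\end{equation*}
so that $F\in C^\infty(U)$ automatically; the whole content of the theorem lies in the boundary behaviour.

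Fix $a\in C$ and a multi-index $\alpha$ with $|\alpha|\le k$. The identity $\sum_i\varphi_i\equiv 1$ on $U$ forces $\sum_i D^\gamma\varphi_i\equiv 0$ for every $\gamma\neq 0$, and Leibniz's formula then gives
\begin{equation*}
D^\alpha F(x)-D^\alpha P_a(x)=\sum_i\sum_{\beta\le\alpha}\binom{\alpha}{\beta}D^{\alpha-\beta}\varphi_i(x)\,\bigl[D^\beta P_{p_i}(x)-D^\beta P_a(x)\bigr].
\end{equation*}
Taylor-expanding the two polynomials in the bracket around $a$ and substituting the compatibility relation produces the clean identity
\begin{equation*}
D^\beta P_{p_i}(x)-D^\beta P_a(x)=-\sum_{|\gamma|\le k-|\beta|}\frac{R_{\beta+\gamma}(a,p_i)}{\gamma!}(x-a)^\gamma.
\end{equation*}
Since $|a-p_i|\lesssim|x-a|$ whenever $x\in\mathrm{supp}\,\varphi_i$ lies near $a$, hypothesis $(W^k)$ yields $|R_{\beta+\gamma}(a,p_i)|=o(|x-a|^{k-|\beta|-|\gamma|})$ uniformly as $x\to a$; plugging these bounds back, exploiting the cancellations $\sum_i D^{\alpha-\beta}\varphi_i\equiv 0$ for $\beta\neq\alpha$, and using the $\varphi_i$-derivative estimates together with the bounded overlap of $\{Q_i^{*}\}$, a standard balancing argument delivers $D^\alpha F(x)-D^\alpha P_a(x)=o(|x-a|^{k-|\alpha|})$ as $x\to a$ from $U$. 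Combined with $D^\alpha P_a(a)=f_\alpha(a)$ this gives $D^\alpha F(x)\to f_\alpha(a)$; approach along $C$ is handled directly by $(W^k)$, and uniformity on compacta promotes everything to $F\in C^k(\R^n)$ with $D^\alpha F=f_\alpha$ on $C$.

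The converse is routine: if $F\in C^k(\R^n)$ extends the jet, Taylor's formula with integral remainder for $F$ immediately produces $(W^k)$ uniformly on compact subsets of $C$. The principal obstacle in the sufficiency direction is the final balancing: the partition-of-unity derivatives blow up like $\mathrm{dist}(x,C)^{-|\gamma|}$, which in general is much larger than $|x-a|^{-|\gamma|}$ when $x$ approaches $a$ non-tangentially to $C$, so one has to cash in the $\sum_i D^\gamma\varphi_i=0$ cancellation and choose the Taylor expansion centre carefully in each term in order that the uniform $(W^k)$ smallness of the remainders $R_{\beta+\gamma}$ actually absorbs this divergence---this is exactly the step that cannot be carried out with merely pointwise Taylor-type conditions.
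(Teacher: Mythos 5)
The paper never proves this statement: Theorem \ref{whi2} is Whitney's classical extension theorem, quoted with references (Whitney, Glaeser, Malgrange, Stein), so your proposal can only be measured against the standard literature proof, which is indeed the Whitney-cube/partition-of-unity construction you outline. Your setup is the right one, your identity $D^\beta P_{p_i}(x)-D^\beta P_a(x)=-\sum_{|\gamma|\le k-|\beta|}\frac{R_{\beta+\gamma}(a,p_i)}{\gamma!}(x-a)^\gamma$ is correct, the converse via Taylor's formula is indeed routine, and you rightly read the condition $(W^{m})$ with exponent $k-|\alpha|$ (the $m$ in the paper's display is a leftover of Whitney's notation, clashing with the paper's $m$ for the target dimension).

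There is, however, a genuine gap precisely where you admit one. As you have arranged the estimate, each term is controlled by $|D^{\alpha-\beta}\varphi_i(x)|$ times $o(|x-a|^{k-|\beta|})$, with $|D^{\alpha-\beta}\varphi_i(x)|\lesssim \mathrm{dist}(x,C)^{-|\alpha-\beta|}$; since $\mathrm{dist}(x,C)$ can be arbitrarily small compared with $|x-a|$ when $x\to a$ tangentially to $C$, the product need not be $o(|x-a|^{k-|\alpha|})$, and the cancellation $\sum_i D^{\alpha-\beta}\varphi_i\equiv 0$ has already been spent in subtracting $D^\beta P_a$, so it cannot be cashed in a second time. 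Appealing to ``a standard balancing argument'' does not close this, and your final paragraph concedes that the decisive choice of expansion centre is left undone. The classical resolution is to compare, for $x\in U$, with the jet at a point $a^{*}\in C$ nearest to $x$, so that $|x-a^{*}|=\mathrm{dist}(x,C)$ and $|p_i-a^{*}|\lesssim\mathrm{dist}(x,C)$ for every $i$ with $x\in\mathrm{supp}\,\varphi_i$: then the bracket is $o\bigl(\mathrm{dist}(x,C)^{k-|\beta|}\bigr)$, which exactly absorbs the $\mathrm{dist}(x,C)^{-|\alpha-\beta|}$ blow-up, and one finishes with the separate, purely jet-level estimate $D^\alpha P_{a^{*}}(x)-D^\alpha P_a(x)=o(|x-a|^{k-|\alpha|})$, valid because $|a^{*}-a|\le 2|x-a|$ and $(W^{k})$ holds uniformly on compacta. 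With that modification (plus the routine continuity check of $D^\alpha F$ at points of $C$ approached within $C$, and local finiteness of the cube family to get $C^k$ on all of $\R^n$), your outline becomes the standard proof; as written, the heart of the sufficiency direction is missing.
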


Let us set some other notation. We denote by $\mathcal{L}^n(E)$ the outer Lebesgue measure of a set $E\subseteq \mathbb{R}^n$. For $s\geq 0$, the $s-$dimensional Hausdorff measure is denoted by $\mathcal{H}^s$, and the $s-$dimensional Hausdorff content by $\mathcal{H}^{s}_{\infty}$. Recall that for any subset $E$ of $\mathbb{R}^n$ we have, by definition,
$$\mathcal{H}^s(E)=\lim_{\delta\searrow 0}\mathcal{H}^{s}_{\delta}(E)=\sup_{\delta >0}\mathcal{H}^{s}_{\delta}(E),$$
where for each $0<\delta\leq \infty$,
$$\mathcal{H}^{s}_{\delta}(E)=\inf\left\lbrace \sum^{\infty}_{i=1}(\text{diam}\, F_i)^s:\, \text{diam}\, F_i\leq \delta,\,E\subseteq \bigcup^{\infty}_{i=1} F_i\right\rbrace .$$
It is well known that the measures $\mathcal{H}^n$,  $\mathcal{H}^{n}_{\infty}$ and $\mathcal{L}^n$ are equivalent on $\R^n$, and that $\mathcal{H}^{s}$ and $\mathcal{H}^{s}_{\infty}$ have the same null sets. Another important fact about the Hausdorff measure $\mathcal{H}^s$ is that it is Borel regular (see e.g. \cite{EvGa}).

We will also say that a set is {\em $s-$sigmafinite} if it can be written as a countable union of sets with finite $\mathcal{H}^s-$measure. 

A set $N\subseteq\mathbb{R}^n$ is called {\em countably $(\mathcal{H}^s,s)$ rectifiable of class $C^{k}$} if and only if there exist countably many $s-$dimensional submanifolds $A_j$ of class $C^{k}$ such that $\mathcal{H}^s(N\setminus \bigcup^{\infty}_{j=1}A_j)=0$. This notion has been introduced in \cite{Anz}.

We will need some more definitions. A function $p:\mathbb{R}^n\rightarrow\mathbb{R}^m$ is said to be a polynomial of degree $k$ centered at the point $x\in\mathbb{R}^n$ if it is written in the form
$$p(x;y)=\sum_{\vert \alpha\vert\leq k}{p_{\alpha}(x)\over \alpha !}(y-x)^{\alpha},$$
where each $p_{\alpha}(x)=(p^1_{\alpha}(x),\dots,p^m_{\alpha})\in\R^m$.

A function $f:\mathbb{R}^n\rightarrow\mathbb{R}^m$ is said to be {\em approximately differentiable} of order $k$ at $x\in\mathbb{R}^n$ if there is a polynomial $p_k(x;y)$, centered at $x$, and of degree at most $k$, such that $$\text{ap}\lim_{y\rightarrow x}{\left| f(y)-p_k(x;y)\right|\over \vert y-x\vert^k}=0.$$
On the other hand $f$ will be said to have an {\em approximate $(k-1)-$Taylor polynomial} at $x$ if there is a polynomial $p_{k-1}(x;y)$, centered at $x$, and of degree at most $k-1$, such that
$$\text{ap}\limsup_{y\rightarrow x}{\left| f(y)-p_{k-1}(x;y)\right|\over \vert y-x\vert^k}<+\infty.$$
We recall that $\text{ap}\lim_{y\rightarrow x}f(y)=l$ means that for every $\varepsilon>0$,
$$\lim_{r\rightarrow 0} {\mathcal{L}^n\left( B(x,r)\cap\left\lbrace y\in\mathbb{R}^n:\,\vert f(y)-l\vert\geq\varepsilon\right\rbrace \right) \over \mathcal{L}^n(B(x,r))}=0,$$
and that $\text{ap}\limsup_{y\rightarrow x}f(y)$ is the infimum of all those $\lambda\in\mathbb{R}$ such that
$$\lim_{r\rightarrow 0} {\mathcal{L}^n\left( B(x,r)\cap\left\lbrace y\in\mathbb{R}^n:\, f(y)\geq\lambda\right\rbrace \right) \over \mathcal{L}^n(B(x,r))}=0.$$
Observe that if a function $f$ is of class $C^k$ then in particular $f$ has a Taylor expansion of order $k$ at $x$, and therefore $f$ is approximately differentiable of order $k$ at $x$, with corresponding Taylor polynomial
$$p_k(x;y)=\sum_{\vert \alpha\vert\leq k}{D^{\alpha}f(x)\over \alpha !}(y-x)^{\alpha}.$$
($D^{\alpha}f(x)=(D^{\alpha}f^1(x),\dots,D^{\alpha}f^m(x))$).

It should be noted that if $f$ is approximately differentiable of order $k$ (or has an approximate $(k-1)-$Taylor polynomial) at $x$ then the corresponding polynomial $p_{k}(x;y)$ (or $p_{k-1}(x;y)$) of degree at most $k$ (or $k-1$) is unique. Actually all the usual rules about differentiability of sums, products and quotients of functions apply to approximate differentiable functions as well. 
 
Now we can change our notation and express the unique polynomial $p_{k}(x;y)$ by
\begin{equation}\label{aprox}
p_{k}(x;y)=\sum_{\vert\alpha\vert\leq k}{f_{\alpha}(x)\over \alpha!}(y-x)^{\alpha}.
\end{equation}
In particular $f_0(x)=f(x)$.

Observe that using the notation $f_{\alpha}$ does not by any means imply that there exists a derivative $D^{\alpha}f$ in the usual sense, nor that $f_{\alpha}(x)=D^{\alpha}f(x)$ even if $D^{\alpha}f(x)$ exists.

From now on, every time we say a function or a set is measurable, and unless we specify the measure, we will mean it with respect to the Lebesgue measure. 

In the proofs of Theorem \ref{i} and Theorem \ref{o} we will make heavy use of the following Lemma, which is an easy consequence of an argument of Liu and Tai in the proof of \cite[Theorem 1]{Liu}. For completeness, and for the reader's convenience, we provide a detailed argument.

\begin{lem}\label{u}
Let $f:\mathbb{R}^n\rightarrow\mathbb{R}^m$ be a measurable function, $k$ a positive integer and $N$ a subset of $\mathbb{R}^n$. Consider the following statements.
\begin{enumerate}[(i)]
\item $f$ is approximately differentiable of order $k$ for all $x\in\mathbb{R}^n\setminus N$.
\item $f$ has an approximate $(k-1)-$Taylor polynomial for all $x\in\mathbb{R}^n\setminus N$.
\item There exists a decomposition
$$\mathbb{R}^n=\bigcup^{\infty}_{j=1}B_j\cup N,$$
such that for each $j\in\mathbb{N}$ there is a function $g_j\in C^{k-1,1}(\mathbb{R}^n;\mathbb{R}^m)$ with $f_{\alpha}(x)=D^{\alpha}g_j(x)$ for all $x\in B_j$ and $\vert \alpha\vert \leq k-1$.
\end{enumerate}
Then we have that $(i)\Rightarrow (ii) \Rightarrow (iii)$.
\end{lem}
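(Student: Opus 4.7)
The plan is to handle the two implications separately.

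\textbf{(i) $\Rightarrow$ (ii).} This is immediate. If the order-$k$ approximate Taylor polynomial at $x$ is $p_k(x;y) = p_{k-1}(x;y) + \sum_{|\alpha|=k}\frac{f_\alpha(x)}{\alpha!}(y-x)^\alpha$, then the triangle inequality gives
$$|f(y) - p_{k-1}(x;y)| \leq |f(y) - p_k(x;y)| + C(x)|y-x|^k,$$
so dividing by $|y-x|^k$ one sees that the ap-limsup at $x$ is at most $C(x)<\infty$, and $p_{k-1}(x;\cdot)$ is an approximate $(k-1)$-Taylor polynomial at $x$.

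\textbf{(ii) $\Rightarrow$ (iii).} The strategy is to partition $\mathbb{R}^n \setminus N$ into countably many pieces $B_j$ on which the hypotheses \eqref{derivadas de whitney}--\eqref{condicion whitney} of Theorem \ref{whi} hold with $\omega(s)=s$, and then take $g_j$ to be the $C^{k-1,1}$ extension provided by that theorem. Fix $\eta := 1/(2(2^n+1))$, and for each $j\in\mathbb{N}$ let $A_j$ consist of those $x \in \mathbb{R}^n \setminus N$ for which $|f_\alpha(x)| \leq j$ whenever $|\alpha|\leq k-1$ and
$$\mathcal{L}^n\bigl(B(x,r) \cap E_j(x)\bigr) \geq (1-\eta)\,\mathcal{L}^n\bigl(B(x,r)\bigr) \quad \text{for every } 0 < r \leq 1/j,$$
where $E_j(x) := \{z : |f(z) - p_{k-1}(x;z)| \leq j|z-x|^k\}$. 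Hypothesis (ii) supplies, at each $x\in\mathbb{R}^n\setminus N$, a finite ap-limsup, hence $x$ lies in some $A_j$. Intersecting each $A_j$ with cubes from a fixed countable cover of $\mathbb{R}^n$ by cubes of diameter $\leq 1/(4j)$ and re-indexing, we obtain a countable family $\{B_j\}$ covering $\mathbb{R}^n \setminus N$, with $\mathrm{diam}(B_j)$ controlled by the bound $j$ associated to $B_j$.

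The crux is verifying the Whitney conditions on a fixed $B = B_j$. Given $x,y \in B$, set $r = 2|x-y| \leq 1/(2j)$; since $B(y,|x-y|) \subseteq B(x,r)$, applying the density condition at $x$ with radius $r$ and at $y$ with radius $|x-y|$, an inclusion-exclusion in $B(y,|x-y|)$ yields
$$\mathcal{L}^n\bigl(B(y,|x-y|) \cap E_j(x) \cap E_j(y)\bigr) \geq \omega_n\bigl(1-\eta(2^n+1)\bigr)|x-y|^n > 0.$$
For any $z$ in this intersection, the polynomial $Q(z) := p_{k-1}(x;z) - p_{k-1}(y;z)$ of degree $\leq k-1$ satisfies $|Q(z)| \leq j|z-x|^k + j|z-y|^k \leq j(2^k+1)|x-y|^k$. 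Since $|Q|$ is thus bounded on a subset of $B(y,|x-y|)$ of relative measure $\geq 1/2$, a Remez-type inequality upgrades this to $\sup_{B(y,|x-y|)}|Q| \leq C_1 j |x-y|^k$, and Markov's inequality for derivatives of polynomials on balls then gives $|D^\alpha Q(y)| \leq C_2 j |x-y|^{k-|\alpha|}$ for all $|\alpha|\leq k-1$. A direct expansion yields
$$D^\alpha Q(y) = \sum_{|\beta|\leq k-1-|\alpha|} \frac{f_{\alpha+\beta}(x)}{\beta!}(y-x)^\beta - f_\alpha(y),$$
and swapping $x \leftrightarrow y$ we recover exactly the Whitney remainder bound \eqref{condicion whitney}. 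Combined with the uniform bound $|f_\alpha| \leq j$ from \eqref{derivadas de whitney}, Theorem \ref{whi} then delivers the desired $g_j\in C^{k-1,1}(\mathbb{R}^n;\mathbb{R}^m)$ with $D^\alpha g_j = f_\alpha$ on $B_j$ for all $|\alpha|\leq k-1$.

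\textbf{Main difficulty.} The substantive step is exploiting the density condition to manufacture many points $z$ simultaneously close to $x$ and $y$ at which $f(z)$ is near both approximate Taylor polynomials, and then converting a polynomial bound on a set of positive measure into a bound on a ball with controlled derivatives. This is handled by the Remez and Markov inequalities for multivariate polynomials; the remainder of the argument is careful bookkeeping with measure densities and Taylor identities.
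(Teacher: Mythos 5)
Your argument is correct and follows essentially the same route as the paper's: the Liu--Tai-type decomposition of $\mathbb{R}^n\setminus N$ into countably many pieces on which the approximate Taylor bound holds quantitatively and the coefficients $f_\alpha$ are uniformly bounded, followed by an application of the $C^{k-1,1}$ case of Theorem \ref{whi}. The only differences are cosmetic: where the paper invokes De Giorgi's lemma (Campanato) to pass from the bound on the positive-measure set of common good points $z$ to the bound on $|f_\alpha(y)-D^\alpha p_{k-1}(x;y)|$, you use Remez plus Markov inequalities for polynomials, and you explicitly cut the pieces to diameter of order $1/j$ before applying Whitney, a locality point the paper leaves implicit.
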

\begin{proof}
We will need to use the following.
\begin{lem}[De Giorgi]\label{ua}
Let $E$ be a measurable subset of the ball $B(x,r)\subset \R^n$ such that $\mathcal{L}^n(E)\geq Ar^n$ for some constant $A>0$. Then for each positive integer $k$ there is a positive constant $C$, depending only on $n,k$ and $A$, such that
$$\left|D^{\alpha}p(x)\right|\leq {C\over r^{n+|\alpha|}}\int_{E}\vert p(y)\vert\,dy$$
for all polynomials $p$ of degree at most $k$ and all multi-indices $|\alpha|\leq k$.
\end{lem}
See \cite{Ca} for a proof of De Giorgi's lemma. 

$(i)\Rightarrow (ii):$ This implication is straightforward. The same points for which $(i)$ holds make $(ii)$ true. Indeed, if a polynomial that gives $(i)$ centered at some $x$ is
$$p_k(x;y)=\sum_{\vert\alpha\vert\leq k}{p_{\alpha}(x)\over \alpha!}(y-x)^{\alpha},$$
we take $p_{k-1}(x;y)=\sum_{\vert\alpha\vert\leq k-1}{p_{\alpha}(x)\over \alpha!}(y-x)^{\alpha}$ and let $\lambda=1+\sum_{\vert \alpha\vert =k}{\vert p_{\alpha}(x)\vert\over\alpha!}<\infty$. Since
$${\vert f(y)-p_{k-1}(x;y)\vert\over \vert y-x\vert^k}\leq {\vert f(y)-p_k(x;y)\vert\over \vert y-x\vert^k} + \sum_{\vert \alpha\vert =k}{\vert p_{\alpha}(x)\vert\over\alpha!}, $$
we have that
$$\left\lbrace y\in\mathbb{R}^n:\, \vert f(y)-p_k(x;y)\vert\leq \vert y-x\vert^k\right\rbrace \subseteq \left\lbrace y\in\mathbb{R}^n:\, \vert f(y)-p_{k-1}(x;y)\vert\leq \lambda\vert y-x\vert^k\right\rbrace,$$
hence by our hypothesis $(i)$
$${\mathcal{L}^n\left( B(x,r)\cap\left\lbrace y\in\mathbb{R}^n:\, {\vert f(y)-p_{k-1}(x;y)\vert\over \vert y-x\vert^k}\leq \lambda\right\rbrace \right) \over \mathcal{L}^n(B(x,r))}\geq {\mathcal{L}^n\left( B(x,r)\cap\left\lbrace y\in\mathbb{R}^n:\, {\vert f(y)-p_k(x;y)\vert\over \vert y-x\vert^k}\leq 1\right\rbrace \right) \over \mathcal{L}^n(B(x,r))}\xrightarrow{r\rightarrow 0} 1, $$
which implies that
$$\text{ap}\limsup_{y\rightarrow x}{\vert f(y)-p_{k-1}(x;y)\vert\over \vert y-x\vert^k}\leq \lambda<+\infty.$$

$(ii)\Rightarrow (iii):$ Recall that the approximate $(k-1)-$Taylor polynomials are unique so we may use the notation of \eqref{aprox}.
The idea of the proof consists in splitting $\mathbb{R}^n\setminus N$ into a countable union of sets $\left\lbrace B_j\right\rbrace _{j\geq 1}$, on each of which, with the help of De Giorgi's lemma, we can apply Theorem \ref{whi}. We will show that for each $j\in\mathbb{N}$, 
$$ \left\{ \begin{array}{l} 
\vert f_{\alpha}(y)-D^{\alpha}p_{k-1}(x;y)\vert\leq M\vert x-y\vert^{k-\vert\alpha\vert},\;\; \forall x,y\in B_j,\; \vert y-x\vert \leq{1\over j},\; \vert \alpha\vert\leq k-1 \vspace{0.3cm} \\  \vert f_{\alpha}(x)\vert\leq j,\;\;  \forall x\in B_j    
                                                \end{array}
\right.  $$
where $p_{k-1}(x;y)$ is the polynomial of degree at most $k-1$ that gives $(ii)$ and $M$ is a constant (to be fixed later on) that depends only on $n$, $k$ and $j$.

Let us define
$$  \begin{array}{l} 
\rho:=\displaystyle{{\mathcal{L}^n(B(x,\vert y-x\vert)\cap B(y,\vert y -x\vert))\over \vert y-x\vert^n}},\;\;x,y\in\mathbb{R}^n,\,x\neq y\vspace{0.2cm}\\
W_{j}(x;r):=B(x,r)\setminus\left\lbrace y\in\mathbb{R}^n:\,\vert f(y)-p_{k-1}(x;y)\vert\leq j\vert y-x\vert^k\right\rbrace ,\;\; x\in\mathbb{R}^n,\;r>0,\;j\in\mathbb{N} \vspace{0.2cm}\\
B_j:=\left\lbrace x\in\mathbb{R}^n:\,\mathcal{L}^n(W_j(x;r))\leq\rho{r^n\over 4},\;r\leq {1\over j}\right\rbrace \cap\left\lbrace x\in\mathbb{R}^n:\;\vert f_{\alpha}(x)\vert\leq j,\,\vert\alpha\vert \leq k-1\right\rbrace.  
\end{array}
$$             
Note that $\rho$ only depends on $n$. Since $f$ is measurable  we have that $W_j(x,r)$ are measurable sets. It is immediately checked that $B_j$ is an increasing sequence of sets and $$\bigcup^{\infty}_{j=1}B_j=\mathbb{R}^n\setminus N.$$ For us it will not be important that the sets $B_j$ and the coefficients $f_{\alpha}$ are measurable, although they are indeed so (see Liu-Tai's proof for the delicate induction argument that allows one to show this).

Now, given $j\in\mathbb{N}$, consider two different points $x,y\in B_j$ with $\vert y-x\vert\leq {1\over j}$, and for $r=\vert y-x\vert$ let
$$S(x,y;r,j):=\left[ B(x,r)\cap B(y,r)\right] \setminus\left[ W_j(x;r)\cup W_j(y;r)\right],$$
which is measurable. Moreover,
$$\mathcal{L}^n(S(x,y;r,j))\geq \mathcal{L}^n(B(x,r)\cap B(y,r))-\mathcal{L}^n(W_j(x;r))-\mathcal{L}^n(W_j(y;r))\geq\rho {r^n\over 2}>0.$$
If we take $z\in S(x,y;r,j)$ then we have for $q(z)=p_{k-1}(y;z)-p_{k-1}(x;z)$ the estimate
$$\vert q(z)\vert\leq\vert p_{k-1}(x;z)-f(z)\vert+\vert f(z)-p_{k-1}(y;z)\vert\leq (\vert z-x\vert^k+\vert y-z\vert^k)\leq 2jr^k.$$
We now apply Lemma 1 with $E=S(x,y;r,j)$ to obtain that for each multi-index $|\alpha|\leq k-1$,
$$\vert D^{\alpha}q(y)\vert=\vert f_{\alpha}(y)-D^{\alpha}p_{k-1}(x;y)\vert\leq {C\over r^{n+\vert\alpha\vert}}\int_{S(x,y;r,j)}\vert q(z)\vert\,dz\leq 2jw_nCr^{k-\vert\alpha\vert},$$
where $w_n$ is the volume of the unit ball in $\mathbb{R}^n$ and $C$ is the constant in Lemma 1, which depends only on $n$ and $k$.\\
We see now that for $x,y$ in $B_j$ with $\vert y -x\vert \leq {1\over j}$, the last estimate implies 
$$ \left\{ \begin{array}{l} 
\vert f_{\alpha}(y)-D^{\alpha}p_{k-1}(x;y)\vert\leq M(n,k,j)\vert x-y\vert^{k-\vert\alpha\vert},\;\; \forall\; \vert \alpha\vert\leq k-1 \vspace{0.3cm} \\  \vert f_{\alpha}(x)\vert\leq j\;\;     
                                                \end{array}
\right.  $$
and by applying the Whitney Extension Theorem \ref{whi} we are done. Observe that if $g_j\in C^{k-1,1}$ is the extension of the restriction of $f$ to $B_j$ provided by Theorem \ref{whi}, we do not only have that $g_j$ and $f$ agree in $B_j$, but also the first $k-1$ derivatives of the smooth extension $g_j$ coincide on $B_j$ with the coefficients of the approximate polynomial: we have $f_{\alpha}(x)=D^{\alpha}g(x),\;\;\forall x\in B_j,\,\forall \vert\alpha\vert\leq k-1.$
\end{proof}

We will now present a variant of Lemma \ref{u} where we allow the exponents of the denominator $\vert y-x\vert$ of the approximate limits  to be real numbers, not necessarily integers. This change will allow us to get decompositions with $C^{k-1,t}$ functions, $t\in (0,1]$. 

\begin{lem}\label{uu}
Let $f:\mathbb{R}^n\rightarrow\mathbb{R}^m$ be a measurable function, $k$ a positive integer, $t\in (0,1]$ and $N$ a subset of $\mathbb{R}^n$. Suppose that 
\begin{equation}\label{aaa}
\mathrm{ap}\limsup_{y\rightarrow x}{\vert f(y)-p_{k-1}(x;y)\vert\over\vert y-x\vert^{k-1+t}}<+\infty\;\;\text{for all}\;x\in\mathbb{R}^n\setminus N.
\end{equation}
Then there exists a decomposition 
$$\mathbb{R}^n=\bigcup^{\infty}_{j=1}B_j\cup N$$
such that for each $j\in\mathbb{N}$ there exists $g_j\in C^{k-1,t}(\mathbb{R}^n;\mathbb{R}^m)$ with $f_{\alpha}(x)=D^{\alpha}g_j(x)$ for all $x\in B_j$ and $\vert \alpha\vert \leq k-1$.
\end{lem}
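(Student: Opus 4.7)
The plan is to mimic the proof of Lemma \ref{u} almost verbatim, systematically replacing the exponent $k$ by $k-1+t$ in the denominators defining the relevant exceptional sets, and then invoking the uniform Whitney extension theorem with modulus of continuity $\omega(s)=s^{t}$ rather than $\omega(s)=s$. The resulting extensions will land in $C^{k-1,t}$ instead of $C^{k-1,1}$, which is precisely what \eqref{aaa} is asking for.

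Concretely, I would keep the same geometric quantity $\rho$ as in the previous lemma but redefine the ``bad'' sets and the covering family by
\[
W_j(x;r):=B(x,r)\setminus\bigl\{y\in\mathbb{R}^n:|f(y)-p_{k-1}(x;y)|\leq j|y-x|^{k-1+t}\bigr\},
\]
\[
B_j:=\bigl\{x:\mathcal{L}^n(W_j(x;r))\leq \tfrac{\rho}{4}r^n\text{ for all }r\leq 1/j\bigr\}\cap\bigl\{x:|f_\alpha(x)|\leq j\text{ for all }|\alpha|\leq k-1\bigr\}.
\]
Unwinding the definition of ap-limsup shows that $\{B_j\}$ is an increasing family whose union covers $\mathbb{R}^n\setminus N$: for any $x\notin N$ one picks $j$ larger than both the finite ap-limsup at $x$ and the finitely many numbers $|f_\alpha(x)|$. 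This part carries over with no substantive modification.

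Now, for $x,y\in B_j$ with $r:=|y-x|\leq 1/j$, the set $S(x,y;r,j):=[B(x,r)\cap B(y,r)]\setminus[W_j(x;r)\cup W_j(y;r)]$ has Lebesgue measure at least $\rho r^n/2$ by the same subtraction argument used before. On this set the polynomial $q(z):=p_{k-1}(y;z)-p_{k-1}(x;z)$, of degree at most $k-1$, satisfies $|q(z)|\leq 2jr^{k-1+t}$, simply because $|z-x|^{k-1+t}+|y-z|^{k-1+t}\leq 2r^{k-1+t}$ there. An application of De Giorgi's Lemma \ref{ua} to $q$ then yields, for a constant $M=M(n,k,j)$ and every multi-index $|\alpha|\leq k-1$,
\[
|f_\alpha(y)-D^\alpha p_{k-1}(x;y)|=|D^\alpha q(y)|\leq M\,r^{(k-1)-|\alpha|+t}.
\]

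Finally, this remainder bound reads $|R_\alpha(x,y)|\leq M|x-y|^{(k-1)-|\alpha|}\,|x-y|^{t}$, which is exactly the Whitney compatibility condition \eqref{condicion whitney} for integer parameter $k-1$ and modulus $\omega(s)=s^{t}$; coupled with the pointwise bound $|f_\alpha|\leq j$ on $B_j$, Theorem \ref{whi} furnishes a function $g_j\in C^{k-1,t}(\mathbb{R}^n;\mathbb{R}^m)$ with $D^\alpha g_j=f_\alpha$ on $B_j$ for all $|\alpha|\leq k-1$. The only step that demands any care, rather than formal substitution, is the measure-theoretic covering argument showing $\bigcup_jB_j=\mathbb{R}^n\setminus N$; but since changing the exponent from $k$ to $k-1+t$ does not alter the structure of that argument, this is settled by exactly the reasoning used in the proof of Lemma \ref{u}.
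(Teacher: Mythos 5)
Your proposal is correct and follows essentially the same route as the paper: the paper's own proof simply repeats the argument of Lemma \ref{u} with the exponent $k$ replaced by $k-1+t$, obtains the bound $\vert f_{\alpha}(y)-D^{\alpha}p_{k-1}(x;y)\vert\leq M(n,k,j)\vert x-y\vert^{k-1+t-\vert\alpha\vert}$ on each $B_j$, and then invokes Theorem \ref{whi} with $\omega(s)=s^{t}$. Nothing further is needed.
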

\begin{proof}
The proof is exactly the same as that of Lemma \ref{u}, until the point where we use the Whitney Extension Theorem \ref{whi}. In this case we have that for each $j\in\mathbb{N}$ and for all $x,y\in B_j$ with $\vert y-x\vert\leq {1\over j}$,
$$ \left\{ \begin{array}{l} 
\vert f_{\alpha}(y)-D^{\alpha}p_{k-1}(x;y)\vert\leq M(n,k,j)\vert x-y\vert^{k-1+t-\vert\alpha\vert},\;\; \forall\; \vert \alpha\vert\leq k-1 \vspace{0.3cm} \\  \vert f_{\alpha}(x)\vert\leq j.\;\;      
                                                \end{array}
\right.  $$
At this point we use Theorem \ref{whi} with $\omega(s)=s^{t}$ and we conclude similarly.
\end{proof}

These last theorems will be useful for our versions of the Morse-Sard theorem (Theorem \ref{i} and \ref{o}) where the exceptional sets are countably $(\mathcal{H}^s,s)$ rectifiable of certain class $C^k$ (see the statements of the results for details). However, in order to achieve a result where we are allowed to work with $s-$sigmafinite exceptional sets (Theorem \ref{p}), it will be necessary to have at our disposal a result as the next one. We use once again the ideas of Liu-Tai \cite{Liu}, together with those of Whitney \cite[Theorem 1]{Whitney2}.

\begin{lem}\label{uuu}
Let $f:\mathbb{R}^n\rightarrow\mathbb{R}^m$ be a Borel function, $k$ a positive integer and $s>0$. Suppose that $f$ is approximately differentiable of order $k$ at $\mathcal{H}^s-$almost every point $x\in B$, where $B\subseteq\mathbb{R}^n$ is a Borel set and $\mathcal{H}^s(B)<\infty$. Suppose also that the coefficients $f_{\alpha}(x)$, $|\alpha|\leq k$, are Borel functions. Then there exists a decomposition
$$B=\bigcup^{\infty}_{j=1}B_j\cup N,$$
where for each $j\in\mathbb{N}$ there exists $g_j\in C^{k}(\mathbb{R}^n;\mathbb{R}^m)$ with $f_{\alpha}(x)=D^{\alpha}g_j(x)$ for all $x\in B_j$, $\vert \alpha\vert \leq k$, and $\mathcal{H}^s(N)=0$.
\end{lem}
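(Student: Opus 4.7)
The plan is to combine the Liu--Tai/De Giorgi decomposition argument from the proof of Lemma \ref{u} with Whitney's original (little-$o$) extension theorem in the form of Theorem \ref{whi2}, in place of the big-$O$ version of Theorem \ref{whi}. The Borel measurability of $f$ and of the $f_\alpha$'s, together with the finiteness $\mathcal{H}^s(B)<\infty$, will allow a measure-exhaustion step that was not available under the weaker hypotheses of Lemma \ref{u}, and this extra ingredient is precisely what enables upgrading $C^{k-1,1}$ to $C^k$.

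For each $\epsilon,r>0$ I would introduce the set
\[
U(\epsilon,r):=\left\{ x\in\mathbb{R}^n : \mathcal{L}^n(W_\epsilon(x;s))\leq \rho s^n/4 \text{ for every }s\in(0,r]\right\},
\]
where $W_\epsilon(x;s):=B(x,s)\setminus\{y:|f(y)-p_k(x;y)|\leq \epsilon |y-x|^k\}$, $p_k(x;y)=\sum_{|\alpha|\leq k}(f_\alpha(x)/\alpha!)(y-x)^\alpha$, and $\rho$ is as in the proof of Lemma \ref{u}. Using Fubini together with the monotonicity and left-continuity of $s\mapsto \mathcal{L}^n(W_\epsilon(x;s))$, the universal quantifier can be reduced to a countable dense set of rational $s$, so each $U(\epsilon,r)$ is Borel in $x$. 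The approximate differentiability of order $k$ at $x$ forces $x\in U(\epsilon,1/m)$ for some $m=m(x,\epsilon)$, so for every fixed $\epsilon>0$ the union $\bigcup_m U(\epsilon,1/m)$ covers $B$ up to an $\mathcal{H}^s$-null set. I would also set $E_\ell:=\{x\in B:|f_\alpha(x)|\leq \ell,\ \forall |\alpha|\leq k\}$ (Borel, increasing in $\ell$, with $\bigcup_\ell E_\ell=B$ after redefining $f_\alpha=0$ on the null set where approximate differentiability fails).

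Next comes a diagonal Borel--Cantelli exhaustion. For each $\ell,j\in\mathbb{N}$ and every $i\in\mathbb{N}$, I use $\mathcal{H}^s(E_\ell)\leq \mathcal{H}^s(B)<\infty$ to pick $m_{i,\ell,j}\in\mathbb{N}$ large enough that
\[
\mathcal{H}^s\bigl(E_\ell\setminus U(1/i,\,1/m_{i,\ell,j})\bigr)< 2^{-i}/j,
\]
and set $B_{\ell,j}:=E_\ell\cap\bigcap_{i\geq 1}U(1/i,\,1/m_{i,\ell,j})$. Then $\mathcal{H}^s(E_\ell\setminus B_{\ell,j})\leq 1/j$, and hence $\bigcup_j B_{\ell,j}$ covers $E_\ell$ up to a null set. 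Reindexing $\{B_{\ell,j}\}_{(\ell,j)\in\mathbb{N}^2}$ as $\{B_j\}_{j\in\mathbb{N}}$ yields a Borel decomposition $B=\bigcup_j B_j\cup N$ with $\mathcal{H}^s(N)=0$.

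On each $B_{\ell,j}$ I would verify Whitney's uniform little-$o$ condition $(W^k)$ and invoke Theorem \ref{whi2}. Given $x,y\in B_{\ell,j}$ with $r:=|x-y|\leq 1/m_{i,\ell,j}$, the set $S:=[B(x,r)\cap B(y,r)]\setminus[W_{1/i}(x;r)\cup W_{1/i}(y;r)]$ has $\mathcal{L}^n(S)\geq \rho r^n/2$; applying De Giorgi's Lemma \ref{ua} to the polynomial $q(z):=p_k(y;z)-p_k(x;z)$ of degree $\leq k$, exactly as in the proof of Lemma \ref{u} but with $p_k$ in place of $p_{k-1}$ and with $|q(z)|\leq (2/i)r^k$ on $S$, produces
\[
|R_\alpha(x,y)|=|D^\alpha q(y)|\leq \tfrac{2Cw_n}{i}\,|x-y|^{k-|\alpha|}\qquad\text{for all }|\alpha|\leq k.
\]
Letting $i\to\infty$ shows $R_\alpha(x,y)=o(|x-y|^{k-|\alpha|})$ uniformly for $x,y\in B_{\ell,j}$; in particular the $f_\alpha$'s are uniformly continuous on $B_{\ell,j}$ and extend to $\overline{B_{\ell,j}}$ with the same little-$o$ estimate, so Theorem \ref{whi2} supplies the required $g_j=g_{\ell,j}\in C^k(\mathbb{R}^n;\mathbb{R}^m)$ with $D^\alpha g_j=f_\alpha$ on $B_j$. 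The main technical obstacle is the simultaneous Borel measurability of $U(\epsilon,r)$ and the diagonal exhaustion; it is the gain of an arbitrarily small constant $2Cw_n/i\to 0$ (rather than a single fixed constant) on one and the same piece $B_{\ell,j}$ that converts the Liu--Tai big-$O$ estimate into the little-$o$ condition needed by Whitney's $C^k$ theorem.
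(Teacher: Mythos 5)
Your proposal is correct and follows essentially the same strategy as the paper's proof: Borel measurability of the bad sets via Fubini, a uniformization step made possible by $\mathcal{H}^s(B)<\infty$, De Giorgi's Lemma \ref{ua} applied to $q=p_k(y;\cdot)-p_k(x;\cdot)$ to turn the density information into the estimate $|R_\alpha(x,y)|\leq \tfrac{2Cw_n}{i}|x-y|^{k-|\alpha|}$ with constant tending to $0$, and then Whitney's $C^k$ theorem. Two small differences in execution are worth noting. First, where you run a diagonal exhaustion by hand (choosing $m_{i,\ell,j}$ with $\mathcal{H}^s(E_\ell\setminus U(1/i,1/m_{i,\ell,j}))<2^{-i}/j$ and intersecting over $i$), the paper simply applies Egorov's theorem for the Borel measure $\mathcal{H}^s$ to the Borel functions $\phi_i(x)=\inf\{\eta:\mathcal{L}^n(W_\eta(x;i))<\tfrac{\rho}{4}i^{-n}\}$; your argument is in effect a proof of that instance of Egorov, so nothing is lost. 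Second, the paper is careful (see its footnote) to use Borel regularity of $\mathcal{H}^s$ to replace the Egorov sets by \emph{closed} sets $B_j$, because Theorem \ref{whi2} is stated for closed $C$; you instead keep Borel pieces $B_{\ell,j}$ and pass to $\overline{B_{\ell,j}}$. This is legitimate in your setting precisely because your little-$o$ bound is uniform over the whole piece (not merely on compacta) and the $f_\alpha$ are bounded by $\ell$ there, so each $f_\alpha$ is uniformly continuous on $B_{\ell,j}$, extends uniquely to the closure, and the estimates $|R_\alpha(x,y)|\leq \tfrac{2Cw_n}{i}|x-y|^{k-|\alpha|}$ for $|x-y|<1/m_{i,\ell,j}$ survive the limit — the same closure argument the paper uses in the preliminaries to extend Theorem \ref{whi} to arbitrary sets. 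So the two treatments of the closedness issue are interchangeable here; the paper's choice avoids having to spell out the closure step, yours avoids invoking inner regularity.
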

\begin{proof}
Without loss of generality let us suppose that $f$ is approximately differentiable of order $k$ for all $x\in B$. 

Let $\rho>0$ be as in the proof of Lemma \ref{ua}; recall that $\rho$ only depends on $n$. For each $\eta>0$, $i\in\mathbb{N}$, $x\in B$, define
$$W_{\eta}\left( x;i\right) :=B(x;{{1\over i}})\setminus \left\lbrace y\in B:\,|f(y)-p_k(x;y)|\leq \eta|y -x|^k\right\rbrace .$$
Since $f$ is Borel, these sets are Borel measurable. Consider the set
$$T=\left\lbrace (x,y)\in B\times B:\,|y-x|<{1\over i},\,|f(y)-p_k(x;y)|>\eta|y-x|^k\right\rbrace .$$
All the $f_{\alpha}$ are Borel functions, so $T$ is a Borel measurable set in $B\times B$. It is clear that $W_{\eta}(x;i)=\left\lbrace y\in B:\,(x,y)\in T\right\rbrace $, hence from Fubini's Theorem (see e.g.\,\cite[Proposition 5.1.2.]{Co}) it follows that $\mathcal{L}^n(W_{\eta}(x;i))$ is a Borel measurable function of $x$.

We know that for all $\eta>0$ and $x\in B$,
\begin{equation}\label{33}
\lim_{i\rightarrow\infty}{\mathcal{L}^n(W_{\eta}(x;i))\over\mathcal{L}^n(B(x;{1\over i}))}=0.
\end{equation}

Define
$$\phi_i(x):=\inf\left\lbrace \eta>0:\, \mathcal{L}^n(W_{\eta}(x;i))<{\rho\over 4}\left( {1\over i}\right) ^n\right\rbrace .$$
for each $i\in\mathbb{N}$ and $x\in B$. For fixed $x$, $\phi_i$ is decreasing in $\eta$ and continuous on the left. Thus
\begin{equation}\label{44}
\phi_i(x)\leq\eta\;\;\text{if and only if}\;\; \mathcal{L}^n(W_{\eta}(x;i))<{\rho\over 4}\left( {1\over i}\right)^n 
\end{equation}
and we have that $\phi_i(x)$ is a Borel measurable function.

From \eqref{33} and \eqref{44} it also follows that
$$\lim_{i\rightarrow\infty}\phi_i(x)=0\;\;\text{for every}\; x\in B.
$$

Now, with the goal of getting uniform convergence (up to a small enough set) in the previous limit, we want to apply Egorov's theorem for the measure $\mathcal{H}^s$. Notice that we are allowed to do so because the functions $\phi_i$ are Borel, the set $B$ is $\mathcal{H}^s-$finite, and $\mathcal{H}^s$ is a Borel measure. We thus obtain, for each $j\in\mathbb{N}$, a closed\footnote{Egorov's theorem in general would give us Borel sets $B_j$, but the Hausdorff measures $\mathcal{H}^s$ are Borel regular measures, so it is well known that for every Borel set $A$, if $\mathcal{H}^s(A)<\infty$ there exists for each $\varepsilon>0$ a closed set $C$ such that $C\subseteq A $ and $\mathcal{H}^s(A\setminus C)<\varepsilon$. This fact cannot be overlooked because our using Theorem \ref{whi2} forces us to work with closed sets.} set $B_j\subseteq B$ such that $\mathcal{H}^s(B\setminus B_j)<{1\over j}$ and $\lim_{i\rightarrow\infty}\phi_i(x)=0$ uniformly on $B_j$. 

Observe that $\mathcal{H}^s\left( B\setminus\bigcup^{\infty}_{j=1}B_j\right) =0$. Let us call $N=B\setminus\bigcup^{\infty}_{j=1}B_j$.\\

Now we just have to see that for each of these sets $B_j$ we can apply Theorem \ref{whi2} in order to get a $C^k$ extension to the whole space. 
Fix $j\in\mathbb{N}$ and a multi-index $|\alpha|\leq k$. We have to prove that for each $\varepsilon>0$, there exists $\delta>0$ such that
$$\vert f_{\alpha}(y)-D^{\alpha}p_k(x;y)\vert\leq \varepsilon\vert y-x\vert^{k-|\alpha|}\;\;\text{if}\; x,y\in B_j,\;\vert y-x\vert <\delta.$$

Let then $\varepsilon>0$. If $C>0$ denotes the constant of De Giorgi's Lemma \ref{ua}, choose $i_0\in\mathbb{N}$ such that
$$\vert \phi_i(x)\vert\leq {\varepsilon\over 2w_nC(1+\varepsilon)}:=\varepsilon_0\;\;\text{for all}\; x\in B_j\;\text{and all}\; i\geq i_0.$$
Also take $i_1\geq i_0$ sufficiently large such that $\left( 1+{1\over i_1}\right)^k\leq 1+\varepsilon$. 
Using \eqref{44} we have
$$\mathcal{L}^n\left( W_{\varepsilon_0}(x;i)\right) <{\rho\over 4}\left( {1\over i}\right)^n\;\;\text{for all}\; x\in B_j\;\text{and all}\; i\geq i_1.$$
Take $\delta<{1\over i_1}$ and $x,y\in B_j$ with $\vert y-x\vert <\delta$.
There exists $i_2\geq i_1$ such that ${1\over i_2+1}\leq |y-x|\leq {1\over i_2}$.
Now consider the set $S\left( x,y;i_2,\varepsilon_0\right)=\left[ B(x,{1\over{i_2}})\cap B(y,{1\over{i_2}})\right] \setminus\left[ W_{\varepsilon_0}(x;i_2)\cup W_{\varepsilon_0}(y;i_2)\right]$ analogously to Lemma \ref{u}. We have that 
$$\mathcal{L}^n\left( S\left( x,y;i_2,\varepsilon_0\right)\right)\geq\mathcal{L}^n\left( B\left( x,{1\over{i_2}}\right) \cap B\left( x+{y-x\over i_2|y-x|},{1\over{i_2}}\right)\right) -{\rho\over 2}\left( {1\over i_2}\right) ^n={\rho\over 2}\left( {1\over i_2}\right) ^n >0.$$
Then take $z\in S\left( x,y;i_2,\varepsilon_0\right)$ and observe that $|z-x|^{k-|\alpha|},|z-y|^{k-|\alpha}\leq (1+\varepsilon)|y-x|^{k-|\alpha|}$. Using De Giorgi's Lemma \ref{ua} in the same way as in the previous lemmas, we conclude that
$$\vert f_{\alpha}(y)-D^{\alpha}p_{k}(x;y)\vert\leq \varepsilon\vert x-y\vert^{k-\vert\alpha\vert}. $$
By applying the Whitney Extension Theorem \ref{whi2} the proof is complete.
\end{proof}

\section{A Morse-Sard Theorem for approximate differentiable functions}

Our aim is to prove a Morse-Sard theorem for functions that only are approximately differentiable of order $k$ or that have approximate $(k-1)-$Taylor polynomials on some sets. Consequently we will need to deal with weaker notions of derivatives and critical sets.
\begin{defn}
{\em Let $f:\mathbb{R}^n\rightarrow\mathbb{R}^m$ be a measurable function that is approximately differentiable of order $k$ at $x$, with unique approximate polynomial
$$p_{k}(x)=\sum_{\vert \alpha\vert\leq k}{f_{\alpha}(x)\over\alpha!}(y-x)^{\alpha}.$$
For each multi-index $\alpha$, $\vert\alpha\vert\leq k$, we define the {\em $\alpha-$th differential coefficient} of $f$ at $x$ as $f_{\alpha}(x)$. If $f$ has an approximate $(k-1)-$Taylor polynomial at $x$ we can only define the $\alpha-$th differential coefficient $f_{\alpha}(x)$ for $|\alpha|\leq k-1$.}
\end{defn}

Recall that we do not necessarily have $f_{\alpha}(x)=D^{\alpha}f(x)$ in any usual sense, and $D^{\alpha}f(x)$ may even not exist. 
 
From now on we will use the following notation
$$p_{k}(x;y)=\sum_{\vert \alpha\vert\leq k}{f_{\alpha}(x)\over\alpha!}(y-x)^{\alpha}=f(x)+F_1(x)(y-x)+\cdots {F_{k}f(x)\over (k)!}(y-x)^k.$$ 
where the $F_j(x)$ are the $j-$multilinear and symetric maps whose coefficients with respect to the standard basis of $\R^n$ are given by $f_{\alpha}(x)$, $|\alpha|=j$ ($j=1,\dots, k$). Again we stress that we do not necessarily have $F_j(x)=D^{j}f(x)$, and the latter may not exist. However, if a function is one time differentiable at $x$ in the usual sense, we do have $Df(x)=F_1(x)$.

We are now in a position to introduce a generalized notion of critical set.

\begin{defn} 
{\em If $f$ is approximately differentiable of order $1$ on some set of points, we define
$$C_f:=\left\lbrace x\in\mathbb{R}^n:\, F_1(x)\;\text{is defined and} \;\text{rank}\,(F_1(x))\;\text{is not maximum}\right\rbrace .$$ }
\end{defn}

\begin{rem}\label{hh}
{\em If a function $f$ only has an approximate $(0)-$Taylor polynomial at almost every point of $\mathbb{R}^n$ we can still define the set of critical points up to an $\mathcal{L}^n-$null set. According to Liu-Tai's result \cite[Theorem 1]{Liu}, $f$ is approximately differentiable of order $1$ almost everywhere, so we consider the coefficients of the linear part of the corresponding polynomial.}

\end{rem}

\begin{defn}[$(N_0)-$property] 
{\em Let $f:\mathbb{R}^n\rightarrow\mathbb{R}^m$ be measurable and let us suppose we have a notion of derivative for $f$, and hence a set of critical points $C_f$. We say that f satisfies the $(N_0)-$property with respect to the Hausdorff measure $\mathcal{H}^{s}$, $s\in(0,n]$, if and only if
$$E\subseteq C_f,\;\;\mathcal{H}^s(E)=0\Rightarrow\;\mathcal{L}^m(f(E))=0.$$}
\end{defn}

\begin{defn}[Luzin's $N-$property] 
{\em Let $f:\mathbb{R}^n\rightarrow\mathbb{R}^m$ be measurable. We say that $f$ satisfies Luzin's $N-$property with respect to the Hausdorff measure $\mathcal{H}^{s}$, $s>0$, if and only if
$$E\subseteq \mathbb{R}^n,\;\;\mathcal{H}^s(E)=0\Rightarrow\;\mathcal{H}^s(f(E))=0.$$
}
\end{defn}

The following theorem, due to Norton \cite[Theorem 2]{No}, will also be an important ingredient in our proofs of Theorem \ref{i}, \ref{o} and \ref{p}.

\begin{thm}[Norton]\label{nor}
Let $k$ be a positive integer, $t\in(0,1]$ and $f:\mathbb{R}^n\rightarrow\mathbb{R}^m$. 
\begin{enumerate}[(i)]
\item If $f\in C^{k,t}$ and $E\subseteq C_f$ is $\mathcal{H}^{k+t+m-1}-$null, then $\mathcal{L}^m(f(E))=0$. That is to say, $f$ has the $(N_0)-$property with respect to the measure $\mathcal{H}^{k+t+m-1}$.
\item If $f\in C^{k}$ and $E\subseteq C_f$ is $(k+m-1)-$sigmafinite, then $\mathcal{L}^m(f(E))=0$
\end{enumerate}
\end{thm}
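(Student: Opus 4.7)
Plan. My approach is a covering argument reduced to a local estimate: for $f \in C^{k,t}$ and $x \in C_f$, show
$\mathcal{L}^m(f(B(x,r))) \leq C r^{k+t+m-1}$ for all sufficiently small $r>0$, with $C$ locally bounded. Once this is established, cover $E \subseteq C_f$ by balls $\{B(x_i, r_i)\}$ centered on $E$ with $\sum_i r_i^{k+t+m-1}$ arbitrarily small (available, after localizing to a compact piece where $C$ is uniform, from $\mathcal{H}^{k+t+m-1}(E)=0$), so that
\[
\mathcal{L}^m(f(E)) \leq \sum_i \mathcal{L}^m\bigl(f(B(x_i, r_i))\bigr) \leq C \sum_i r_i^{k+t+m-1}
\]
can be made arbitrarily small.

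For the local estimate, the subspace $V_x := \mathrm{Im}(Df(x))$ has dimension at most $m-1$; let $\pi_x : \mathbb{R}^m \to V_x^{\perp}$ be the orthogonal projection. Tangentially, $f|_{B(x,r)}$ lies within the Lipschitz image $V_x \cap B(0, Lr)$, contributing a factor $\leq C r^{m-1}$ to the target $m$-volume. Normally, the $C^{k,t}$ Taylor expansion of $\pi_x \circ f$ at $x$ reads
\[
\pi_x\bigl(f(y) - f(x)\bigr) = \sum_{2 \leq |\alpha| \leq k} \frac{\pi_x D^{\alpha} f(x)}{\alpha !}(y-x)^{\alpha} + O(|y-x|^{k+t}),
\]
so whenever all the higher coefficients $\pi_x D^{\alpha} f(x)$ for $2 \leq |\alpha| \leq k$ happen to vanish, the normal thickness is $O(r^{k+t})$ and we obtain the desired product estimate $r^{m-1} \cdot r^{k+t}$.

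The main obstacle is that this extra vanishing of $\pi_x D^i f(x)$ for $i \geq 2$ is not automatic from $x \in C_f$; only the first-order part is forced to be degenerate. I would resolve this by stratifying the critical set based on the order of vanishing of the $\pi_x D^i f(x)$: on the deepest stratum $\{x \in C_f : \pi_x D^i f(x) = 0 \text{ for all } 2 \leq i \leq k\}$ the estimate above applies directly, while on each intermediate stratum some component of $\pi_x D^{j+1} f(x)$ is nonzero, which lets one apply the implicit function theorem to reduce the problem locally to a Morse-Sard estimate for a restricted $C^{k,t}$ map on a codimension-one smooth submanifold; an induction on $n$, with trivial base case $n < m$ (where $\mathcal{L}^m(f(\mathbb{R}^n))=0$ for dimension reasons), then closes the local estimate. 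The base case $k=1$ requires no stratification at all, since the $C^{1,t}$ Taylor expansion directly yields normal thickness $Cr^{1+t}$ and hence $\mathcal{L}^m(f(B(x,r))) \leq Cr^{m+t}$.

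Part (ii) follows the same template but replaces the quantitative Taylor remainder $O(|y-x|^{k+t})$ by the qualitative $C^k$ version $o(|y-x|^k)$, giving $\mathcal{L}^m(f(B(x,r))) = o(r^{k+m-1})$ at critical points. The $(k+m-1)$-sigmafinite hypothesis is handled by restricting to a Borel subset of finite $\mathcal{H}^{k+m-1}$ measure and invoking an Egorov step on the pointwise $o(\cdot)$ bound (in the spirit of the proof of Lemma \ref{uuu} above) to make the estimate uniform on a large sub-piece; the covering argument then closes as before.
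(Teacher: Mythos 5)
The paper itself does not prove this theorem; it is quoted from Norton \cite{No}, so your attempt has to be measured against Norton's original argument. Your scheme is sound exactly where it applies literally: for $k=1$, and more generally on the deepest stratum $\{x\in C_f:\ \pi_x D^if(x)=0,\ 2\le i\le k\}$, the slab estimate $\mathcal{L}^m\bigl(f(B(x,r))\bigr)\le C\,r^{k+t+m-1}$ is correct (tangential extent $\lesssim r$ in at most $m-1$ directions, normal thickness $\lesssim r^{k+t}$), and the covering argument closes there. The genuine gap is the treatment of the intermediate strata, and it is not a technicality. First, your strata are defined through $\pi_x$, which depends on $x$ via $\mathrm{Im}\,Df(x)$ and is in general discontinuous where the rank of $Df$ jumps; ``some component of $\pi_x D^{j+1}f(x)\neq 0$'' is not the non-vanishing of the gradient of a fixed function whose zero set contains the stratum, so the implicit function theorem cannot be applied as stated (the classical move is to stratify by the vanishing of actual partial derivatives $D^\alpha f^s$). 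Second, and more fundamentally, even with the classical stratification the hypersurface one restricts to is a level set of a derivative of order $j\ge 1$, hence only of class $C^{k-j}$ (at best $C^{k-j,t}$); the restricted map is then not $C^{k,t}$, so the inductive hypothesis in dimension $n-1$ with the same pair $(k,t)$ is unavailable, and invoking it with the smoothness you actually retain would require $E$ to be null for a Hausdorff measure of strictly smaller dimension than $k+t+m-1$, which is not given. This loss of derivatives under restriction is precisely the obstruction that forces the sharp results in this circle (Bates's $C^{n-m,1}$ theorem, Norton's theorem) to bring in extra machinery (Kneser--Glaeser type rough composition, or A.~P.~Morse's decomposition lemma extended to H\"older classes).

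Note also that the quantity you set out to bound is the wrong one at intermediate-stratum points: the full-ball estimate $\mathcal{L}^m\bigl(f(B(x,r))\bigr)\le Cr^{k+t+m-1}$ is simply false there (already $f(x,y)=(x,y^2)$ gives measure $\asymp r^{3}$ at critical points, while $r^{k+t+1}$ is much smaller for $k+t>2$). Norton's proof instead establishes a decomposition $C_f=\bigcup_i A_i$ into countably many pieces on which one has a two-point flatness estimate of order $k+t$ for pairs of points of $A_i$ (in the directions complementary to a fixed near-image subspace), and then bounds $\mathcal{L}^m\bigl(f(B(x,r)\cap A_i)\bigr)$ --- the image of the piece of the critical set, not of the whole ball --- before summing over a cover of $E\cap A_i$. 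Producing that decomposition is where the real work, and the handling of the derivative loss, lives; your sketch leaves it out. Part (ii) inherits the same gap; there the uniformity of the $o(|x-y|^k)$ remainder on compact sets already follows from $f\in C^k$, so the Egorov step is not the issue --- the missing decomposition is.
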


We will also need to use Bates's version of the Morse-Sard Theorem for $C^{n-m,1}$ (see \cite[Theorem 2]{Ba}).
\begin{thm}[Bates]\label{bates}
Let $n$, $m$ be positive integers with $m\leq n$ and $f:\mathbb{R}^n\rightarrow\mathbb{R}^m$. If $f\in C^{n-m,1}(\mathbb{R}^n;\mathbb{R}^m)$, then the set of critical values of $f$ has $\mathcal{L}^m-$measure zero.
\end{thm}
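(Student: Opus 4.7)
The plan is to follow the classical proof of the Morse--Sard theorem, modified by the single key observation that constitutes Bates's improvement. Since $f\in C^{n-m,1}(\R^n;\R^m)$, the top derivative $D^{n-m}f$ is locally Lipschitz, and integrating $D^{n-m}f$ along line segments yields the Taylor remainder bound
$$\Bigl| f(y)-\sum_{|\alpha|\leq n-m}\frac{D^\alpha f(x)}{\alpha!}(y-x)^\alpha\Bigr|\leq C_K\,|y-x|^{n-m+1}$$
for all $x,y$ in any compact set $K\subset\R^n$. This is exactly the pointwise Taylor estimate of order $n-m+1$ satisfied by a $C^{n-m+1}$ function, and it is precisely the estimate used in the deepest step of the classical proof. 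Hence one can run the classical argument essentially unchanged.

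The next step is to stratify the critical set. Define $A_j:=\{x\in\R^n:\,D^i f(x)=0\text{ for }1\leq i\leq j\}$ for $j=1,\dots,n-m$, and decompose
$$C_f\;=\;(C_f\setminus A_1)\;\cup\;\bigcup_{j=1}^{n-m-1}(A_j\setminus A_{j+1})\;\cup\;A_{n-m}.$$
For the deepest stratum $A_{n-m}$, the Taylor bound yields $|f(y)-f(x)|\leq C_K|y-x|^{n-m+1}$ whenever $x\in A_{n-m}\cap K$ and $y\in K$. Partitioning $K$ into cubes of side $\delta$ and keeping only those that meet $A_{n-m}$, the image of each such cube is contained in a ball of radius $\leq C\delta^{n-m+1}$ in $\R^m$. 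Summing over the $O(\delta^{-n})$ relevant cubes gives $\mathcal{L}^m(f(A_{n-m}\cap K))\leq C\delta^{(n-m)(m-1)}$, which tends to $0$ as $\delta\to 0^+$ whenever $1<m<n$. The corner cases $m=1$ (handled by a direct scalar-valued estimate going back to Whitney) and $m=n$ (in which case $f$ is merely Lipschitz, so the conclusion reduces to Rademacher's theorem together with the area formula) must be treated separately. For the intermediate strata $A_j\setminus A_{j+1}$ and for the stratum $C_f\setminus A_1$, where some derivative of order $j+1$ is nonzero or $Df$ has positive rank, one uses the implicit function theorem to straighten out a nonvanishing partial derivative, thereby reducing the question to a Morse--Sard statement for a map between lower-dimensional spaces in which the parameter $n-m$ has decreased by one. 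The entire argument is then organized as an induction on $n-m$.

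The main obstacle I foresee is checking that this induction step is admissible: one has to verify that the reduced, straightened map inherits the regularity $C^{n-m-1,1}$ after the implicit function theorem has been applied. This amounts to showing, via the chain and inverse function rules, that a $C^{n-m,1}$ diffeomorphism has a $C^{n-m,1}$ inverse, so that composing $f$ with it preserves the Lipschitz continuity of the top derivative. This is a tedious but essentially bookkeeping computation, relying on the fact that sums and products of Lipschitz functions with bounded derivatives are again Lipschitz. Once it is in place, the induction closes, and combined with the deep-stratum estimate above, the theorem follows.
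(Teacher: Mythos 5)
A preliminary remark: the paper does not prove this statement at all; it imports it as a black box from Bates's article \cite{Ba} and uses it as an ingredient in the proofs of Theorems \ref{i}, \ref{o} and \ref{p}. So your attempt is to be judged as a proof of Bates's theorem itself, and as such it has two genuine gaps. The first concerns the intermediate strata $A_j\setminus A_{j+1}$ with $2\le j\le n-m-1$ (present as soon as $n-m\ge 3$). There the straightening is done with a nonvanishing derivative of order $j+1$, i.e.\ with a function $w=D^{\beta}f_r$, $|\beta|=j$, whose gradient does not vanish; but $w$ is only of class $C^{n-m-j,1}$, so the local diffeomorphism $h=(w,x_2,\dots,x_n)$ and the reduced map obtained by restricting $f\circ h^{-1}$ to $\{w=0\}$ are only $C^{n-m-j,1}$, whereas your induction (domain dimension $n-1$, same target $\R^m$) requires class $C^{(n-1)-m,1}=C^{n-m-1,1}$. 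These match only for $j=1$. The obstacle you single out --- that a $C^{n-m,1}$ diffeomorphism has a $C^{n-m,1}$ inverse --- is correct but pertains only to the rank stratum $C_f\setminus A_1$ (where, incidentally, the reduction goes from $\R^n\to\R^m$ to slices $\R^{n-1}\to\R^{m-1}$, so $n-m$ does not decrease and the induction there is on $m$ or $n$, not on $n-m$) and to the stratum $j=1$; it does not address the loss of $j-1$ degrees of regularity in the higher strata. This loss is precisely why the sharp-smoothness Morse--Sard theorem cannot be obtained by running Milnor's smooth argument ``essentially unchanged'', and why Bates's paper has to argue differently.

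The second gap is the case $m=1$, which is not a peripheral corner case but the crux of the theorem. Your deep-stratum count gives the exponent $(m-1)(n-m)$, which vanishes when $m=1$: the bound $|f(y)-f(x)|\le C|y-x|^{n}$ for $x\in A_{n-1}$, summed over the $\sim\delta^{-n}$ cubes, yields only $O(1)$. In the classical $C^{n}$ setting one saves the day with the uniform little-$o$ remainder of order $n$ on the deepest stratum, but with $D^{n-1}f$ merely Lipschitz no such improvement is available, and since Whitney's example \cite{Whitney1} is of class $C^{n-1}$ the statement sits exactly at this threshold; a ``direct scalar-valued estimate going back to Whitney'' that disposes of it does not exist, and a genuinely different device is needed here (this is where the actual arguments of Bates, and the related work of Norton \cite{No} and Yomdin \cite{Yom}, do their real work). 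Your treatment of the deepest stratum for $1<m<n$ and the reduction of $n=m$ to Lipschitz maps via the area formula are fine, but as written the induction does not close and the borderline case is missing, so the proposal is not yet a proof.
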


The first of our main results is as follows. 

\begin{thm}\label{i}
Let $f:\mathbb{R}^n\rightarrow\mathbb{R}^m$, $m\leq n$. Suppose that 
\begin{enumerate}[(a)]
\item  $$\mathrm{ap} \limsup_{y\rightarrow x}{\vert f(y)-f(x)\vert\over \vert y-x\vert}<+\infty\;\;\text{for all} \;x\in\mathbb{R}^n\setminus N_0,$$
where $N_0$ is a countable set.
\item The set $$N_1 :=\{x\in\R^n: \mathrm{ap} \limsup_{y\rightarrow x}{\vert f(y)-f(x)-F_1(x)(y-x)\vert\over \vert y-x\vert^2}=+\infty\}$$ is $\mathcal{H}^m-$null.
\item For each $i=2,\dots,n-m$, the set
$$N_{i}:=\{x\in\R^n : \mathrm{ap} \limsup_{y\rightarrow x}{\left| f(y)-f(x)-\cdots-{F_i(x)\over i!}(y-x)^{i}\right|\over \vert y-x\vert^{i+1}}=+\infty\}$$ is countably $(\mathcal{H}^{i+m-1},i+m-1)$ rectifiable of class $C^i$.
\end{enumerate}
Then $\mathcal{L}^m(f(C_f))=0$.\\
The same is true if we replace $\R^n$ with an open subset $U$ of $\R^n$.
\end{thm}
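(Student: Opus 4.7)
The plan is to decompose $\R^n$ according to the exceptional sets $N_0,\dots,N_{n-m}$ and show that each piece of $C_f$ maps to an $\mathcal{L}^m$-null set. Concretely, I would write
$$C_f \;=\;(C_f\cap N_0)\;\cup\;\bigcup_{j=1}^{n-m}\bigl(C_f\cap N_j\setminus\bigcup_{i<j}N_i\bigr)\;\cup\;\bigl(C_f\setminus\bigcup_{i=0}^{n-m}N_i\bigr),$$
and dispose of each summand by combining Lemma \ref{u} with either Bates's Theorem \ref{bates} or Norton's Theorem \ref{nor}. The set $C_f\cap N_0$ is countable, hence its image is. The recurring bookkeeping is: once Lemma \ref{u} has produced a decomposition $\R^n\setminus E=\bigcup_{l}B_l\cup E$ with $f\equiv g_l$ on $B_l$ and $g_l\in C^{k-1,1}(\R^n;\R^m)$, the identity $F_1(x)=Dg_l(x)$ on $B_l$ gives the key inclusion $C_f\cap B_l\subseteq C_{g_l}$, which allows any question about $f$ on $B_l$ to be relayed to $g_l$.

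For the generic piece $C_f\setminus\bigcup_i N_i$, hypotheses (a), (b) and (c) combine to say that $f$ has an approximate $(n-m)$-Taylor polynomial at every such $x$; Lemma \ref{u} with $k=n-m+1$ (applied with its exceptional set equal to $\bigcup_i N_i$) produces $g_l\in C^{n-m,1}(\R^n;\R^m)$ on pieces $B_l$, and Bates's theorem applied to each $g_l$ kills this piece. The piece $C_f\cap(N_1\setminus N_0)$ is treated one step lower: by (a) alone, outside $N_0$ the function $f$ has an approximate $0$-Taylor polynomial, so Lemma \ref{u} with $k=1$ produces Lipschitz $g_l^{(1)}$; since $\mathcal{H}^m(N_1)=0$, Lipschitzness gives $\mathcal{H}^m(g_l^{(1)}(N_1\cap B_l^{(1)}))=0$, which equals $\mathcal{L}^m$-null on $\R^m$.

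For $j\in\{2,\dots,n-m\}$, the piece $C_f\cap N_j\setminus\bigcup_{i<j}N_i$ requires both halves of the machinery. Using that $f$ has an approximate $(j-1)$-Taylor polynomial outside $\bigcup_{i<j}N_i$, Lemma \ref{u} with $k=j$ produces $g_l^{(j)}\in C^{j-1,1}(\R^n;\R^m)$ on pieces $B_l^{(j)}$. Meanwhile (c) decomposes $N_j=\bigcup_{k}A_k\cup Z$ with each $A_k$ an $(j+m-1)$-dimensional $C^j$ submanifold of $\R^n$ and $\mathcal{H}^{j+m-1}(Z)=0$. For $Z$: Norton's Theorem \ref{nor}(i) with parameters $(j-1,1)$ says that $g_l^{(j)}\in C^{j-1,1}$ has the $(N_0)$-property with respect to $\mathcal{H}^{(j-1)+1+m-1}=\mathcal{H}^{j+m-1}$, so $\mathcal{L}^m\bigl(g_l^{(j)}(Z\cap C_{g_l^{(j)}})\bigr)=0$. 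For each $A_k$, cover by local $C^j$ parametrizations $\varphi:U\subseteq\R^{j+m-1}\to A_k$; then $g_l^{(j)}\circ\varphi$ is $C^{j-1,1}$ on $U$, and since $(j+m-1)-m=j-1$, Bates's theorem applies in source dimension $j+m-1$. Because rank$<m$ of $Dg_l^{(j)}(x)$ on $T_xA_k$ is inherited from rank$<m$ on $\R^n$, points of $A_k\cap C_{g_l^{(j)}}$ pull back to critical points of $g_l^{(j)}\circ\varphi$, and we are done.

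I expect the main obstacle to be the exact matching of exponents and regularity, which is really the content of the theorem: the choice $\mathcal{H}^{j+m-1}$ and $C^j$ in condition (c) is dictated by the arithmetic identities $(j-1)+1+(m-1)=j+m-1$ (Norton's threshold applied to $C^{j-1,1}$) and $(j+m-1)-m=j-1$ (Bates's threshold on a $(j+m-1)$-dimensional source), both of which match the regularity $C^{j-1,1}$ that Lemma \ref{u} extracts from an approximate $(j-1)$-Taylor polynomial. The remaining subtlety is the manifold-level application of Bates via $C^j$ charts and the verification that the Lemma \ref{u} identities $f_\alpha=D^\alpha g_l^{(j)}$ on $B_l^{(j)}$ force $C_f\cap B_l^{(j)}\subseteq C_{g_l^{(j)}}$, which is the inclusion underpinning every step.
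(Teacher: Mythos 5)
Your proposal is correct for the main case $n>m$ and follows essentially the same route as the paper: Lemma \ref{u} to produce $C^{j-1,1}$ Whitney pieces with $f_\alpha=D^\alpha g_l$ (hence $C_f\cap B_l\subseteq C_{g_l}$), Bates's Theorem \ref{bates} on the generic part, Norton's Theorem \ref{nor}(i) on the $\mathcal{H}^{j+m-1}$-null part of $N_j$, and Bates applied through $C^j$ charts of the $(j+m-1)$-dimensional pieces, with exactly the exponent arithmetic you identify; your up-front disjoint partition of $C_f$ by ``first exceptional set'' is only a bookkeeping variant of the paper's iterative peeling $N_{n-m}\to N_{n-m-1}\to\cdots\to N_1$, and your use of local charts instead of the paper's global parametrizations is if anything cleaner.

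The one point where your scheme as written does not go through is the degenerate case $n=m$ (and, in your generic step, any appeal to Lemma \ref{u} with $k=1$ when criticality matters): for $k=1$ the lemma only yields Lipschitz $g_l$ with $f=g_l$ on $B_l$ and gives no identity $F_1=Dg_l$, so the ``key inclusion'' $C_f\cap B_l\subseteq C_{g_l}$ is not available, and Bates in the form $C^{0,1}$ is delicate since $Dg_l$ only exists a.e. The paper treats $n=m$ separately, invoking Liu--Tai's theorem (which upgrades a.e.\ first-order approximate differentiability to a decomposition into pieces where $f$ agrees with $C^1$ maps together with their first derivatives) and then the classical Morse--Sard theorem; either add that argument, or argue via density points, Rademacher and the Luzin $N$-property that $F_1=Dg_l$ a.e.\ on $B_l$. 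A second, minor omission: Lemma \ref{u} requires $f$ measurable, which in the paper is extracted from hypothesis (a) (approximate continuity a.e.) before anything else.
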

[Observe that if $n=m$ we only have $(a)$ and if $n=m+1$ we only have $(a)$ and $(b)$.]
\begin{proof}
Note that $(a)$ tells us that $f$ is approximately continuous $\mathcal{L}^n-$almost everywhere so the function $f$ is measurable.

Let us also set some notation by writing each exceptional set $N_i$ ($i=1,\dots,n-m$) as
$$N_i=A_i\cup\bigcup^{\infty}_{k=1}A_{i,k},$$
where $\mathcal{H}^{i+m-1}(A_i)=0$ and each subset $A_{i,k}\subseteq\mathbb{R}^n$ is an $(i+m-1)-$dimensional submanifold of class $C^i$.

First of all let us show that condition $(a)$ implies Luzin's $N-$condition with respect to the Hausdorff measure $\mathcal{H}^s$, $s\in (0,m]$. In fact we will see that there exists a collection $\left\lbrace B_{0,j}\right\rbrace ^{\infty}_{j=1}$ with $\mathbb{R}^n=\bigcup^{\infty}_{j=1}B_{0,j}\cup N_0$ and such that each restriction $f|_{B_{0,j}}$ is locally Lipschitz with constant $2j$. Since $\mathcal{L}^m(f(N_0))=0$, this readily implies that the image of sets of $s-$Hausdorff measure zero ($s\leq m$) has $s-$Hausdorff measure zero.
The argument is again inspired by Liu-Tai's result (\cite[Theorem 1]{Liu}). Consider the sets $B_{0,j}$ as in the proof of Lemma \ref{u} above. We take $x,y\in B_{0,j}$, $\vert y-x\vert \leq {1\over j}$, $r=\vert y-x\vert$. Using that $\mathcal{L}^n(S(x,y;r,j))>0$ it is possible to take $z\in S(x,y;r,j)$ and then
$$\vert f(y)-f(x)\vert\leq \vert f(y)-f(z)\vert +\vert f(z)-f(x)\vert \leq j\vert z-y\vert +j\vert z-x\vert\leq 2j\vert y-x\vert.$$
Now it is clear that, for every $s\in (0,m]$, if $\mathcal{H}^{s}(A)=0$, $A\subseteq \mathbb{R}^n$, then $\mathcal{H}^s(f(A))=0$. Therefore the points where $C_f$ is not defined, which have $\mathcal{H}^m-$measure zero (recall that $f$ is approximately differentiable of order $1$ at $\mathcal{H}^m-$almost every point), have $\mathcal{L}^m-$null image.

Let us make a pause to comment on the special case $n=m$ (we only have condition $(a)$). In this case we also have the critical set of points defined up to a set of $\mathcal{L}^n-$measure zero (see Remark \ref{hh}). Moreover Liu-Tai's result \cite[Theorem 1]{Liu} asserts in particular that
$$\mathbb{R}^n=\bigcup^{\infty}_{j=1}D_j\cup M,$$
where $\mathcal{L}^n(M)=0$ and such that for each $j\in\mathbb{N}$ there is a function $g_j\in C^{1}(\mathbb{R}^n;\mathbb{R}^n)$ with $$D_{j}\subseteq\left\lbrace x\in\mathbb{R}^n:\, f(x)=g_j(x),\, F_1(x)=Dg_j(x)\right\rbrace .$$
Using the classical Morse-Sard theorem we have that for every $j\in\mathbb{N}$,
$$\mathcal{L}^n(f(C_f\cap D_j))=\mathcal{L}^n(f|_{D_{j}}(C_f\cap D_{j}))=\mathcal{L}^n(g_j(C_{g_j}\cap D_{j}))=0.$$
Consequently 
$$\mathcal{L}^n(f(C_f))\leq \sum^{\infty}_{j= 1}\mathcal{L}^n(f(C_f\cap D_j))+\mathcal{L}^n(f(M))=0.$$

$\blacktriangleright$ Step 1: Condition $(c)$ with $i=n-m$ allows us to use  Lemma \ref{u} and write
$$\mathbb{R}^n=\bigcup^{\infty}_{j=1}B_{n-m,j}\cup N_{n-m},$$
in such a way that for each $j\in\mathbb{N}$ there is a function $g_j\in C^{n-m,1}$ with $$B_{n-m,j}\subseteq\left\lbrace x\in\mathbb{R}^n:\, f(x)=g_j(x),\, F_1(x)=Dg_j(x)\right\rbrace .$$

We decompose $C_f$ as
$$C_f=\left( \bigcup^{\infty}_{j=1}C_f\cap B_{n-m,j}\right) \cup\left( C_f\cap N_{n-m}\right).$$ 
Using Bates's result (Theorem \ref{bates}) we have that for every $j\in\mathbb{N}$,
$$\mathcal{L}^m(f(C_f\cap B_{n-m,j}))=\mathcal{L}^m(f|_{B_{n-m,j}}(C_f\cap B_{n-m,j}))=\mathcal{L}^m(g_j(C_{g_j}\cap B_{n-m,j}))=0.$$
By the subadditivity of the Lebesgue measure we have thus reduced our problem to showing that $\mathcal{L}^m(f(C_f\cap N_{n-m}))=0$.\\

$\blacktriangleright$ Step 2: We now work with the condition $(c)$ but for the case $i=n-m-1$. By applying Lemma \ref{u} again, we obtain a decomposition
$$\mathbb{R}^n=\bigcup^{\infty}_{j=1}B_{n-m-1,j}\cup N_{n-m-1},$$
where for each $j\in\mathbb{N}$ there is a function $g_j\in C^{n-m-1,1}$ with \\
$$B_{n-m-1,j}\subseteq\left\lbrace x\in\mathbb{R}^n:\, f(x)=g_j(x),\, F_1(x)=Dg_j(x)\right\rbrace .$$
Recall that $N_{n-m}=A_{n-m}\cup\bigcup^{\infty}_{k=1}A_{n-m,k}$ where $\mathcal{H}^{n-1}(A_{n-m})=0$ and such that there exist maps 
$$\phi_{n-m,k}:\mathbb{R}^{n-1}\longrightarrow A_{n-m,k}\subseteq\mathbb{R}^n$$ of class $C^{n-m}$ for each $k\in\mathbb{N}$. 

We now write $C_f\cap N_{n-m}$ as

\begin{equation}\label{ww}
C_f\cap N_{n-m}=\left( \bigcup^{\infty}_{j=1}C_f\cap A_{n-m}\cap B_{n-m-1,j}\right) \cup\left( \bigcup^{\infty}_{j=1}\bigcup^{\infty}_{k=1}C_f\cap A_{n-m,k}\cap B_{n-m-1,j}\right) \cup\left( C_f\cap N_{n-m-1}\right) .
\end{equation}

Remember that $\mathcal{H}^{n-1}(A_{n-m})=0$, so by using Norton Theorem \ref{nor} $(i)$, for every $j\in\mathbb{N}$ we get
\begin{align*}
\mathcal{L}^m(f(C_f\cap A_{n-m}\cap B_{n-m-1,j}))&=\mathcal{L}^m(f|_{B_{n-m-1,j}}(C_f\cap A_{n-m}\cap B_{n-m-1,j}))=\\
&=\mathcal{L}^m(g_j(C_{g_j}\cap A_{n-m}\cap B_{n-m-1,j}))=0.
\end{align*}
Fix $j,k\in \mathbb{N}$. We now see that $\mathcal{L}^m(f(C_f\cap A_{n-m,k}\cap B_{n-m-1,j}))=0$. Indeed,

It is easy to check that $C_{g_j}\cap A_{n-m,k}\subseteq \phi_{n-m,k}(C_{g_j\circ \phi_{n-m,k}})$, so
\begin{align*}
\mathcal{L}^m(f(C_f\cap A_{n-m,k}\cap B_{n-m-1,j}))&=\mathcal{L}^m(g_j(C_{g_j}\cap A_{n-m,k}\cap B_{n-m-1,j}))\leq\\
&\leq \mathcal{L}^m(g_j(\phi_{n-m,k}(C_{g_j\circ\phi_{n-m,k}})\cap B_{n-m-1,j}))=\\
&=\mathcal{L}^m(g_j\circ\phi_{n-m,k}|_{\phi^{-1}_{n-m,k}(B_{n-m-1,j}\cap A_{n-m,k})}(C_{g_j\circ\phi_{n-m,k}}))=0,
\end{align*}
where in the last equality we have used using Bates's theorem \ref{bates}) applied to the function
$$g_j\circ\phi_{n-m,k}|_{\phi^{-1}_{n-m,k}(B_{n-m-1,j}\cap A_{n-m,k})}:\mathbb{R}^{n-1}\longrightarrow\mathbb{R}^m,$$
which is of class $C^{n-m-1,1}_{\textrm{loc}}(\mathbb{R}^{n-1};\mathbb{R}^m)$.

Therefore, by the subadditivity of the Lebesgue measure and \eqref{ww}, our problem boils down to checking that $\mathcal{L}^m(f(C_f\cap N_{n-m-1}))=0$.\\

$\blacktriangleright$ Final step: Reasoning in the same way for the cases $i=n-m-2,\dots,i=1$, we inductively arrive to the conclusion that it is enough to prove that $\mathcal{L}^m(f(C_f\cap N_1))=0$, which is now automatic using Luzin's $N-$condition with respect to the measure $\mathcal{H}^m$.
\end{proof}

\begin{rem}
{\em It is clear that the above proof can be adapted to get a similar result in which condition $(b)$ is dropped and condition $(c)$ now holds for $i=1, 2,\dots, n-m$. In principle this result is more general than Theorem \ref{i}. The reason for our statement of Theorem \ref{i} is that one of the typical applications of Morse-Sard-type theorems is ensuring that for almost every $y\in\R^m$ the set $f^{-1}(y)$ is regular enough (for instance, it is a $C^1$ manifold if $f$ is assumed to be $C^1$), a property that we would lose if we do not require condition $(b)$.}
\end{rem}

Let us consider now the simpler case where the exceptional sets $N_i$ are $\mathcal{H}^{i+m-1}-$null for $i=1,\dots,n-m$ (i.e $A_{i,k}=\emptyset$ for all $k\geq 1$). We establish an alternate version of the preceding result, in which we let the exponents of the denominators $\vert y-x\vert$ be smaller, and not integers. In return we must ask these limits to be finite in larger sets in order to achieve the Morse-Sard property. The arguments will be the same, but for our use of Lemma \ref{uu} instead of Lemma \ref{u}.

\begin{thm}\label{o}
Let $f:\mathbb{R}^n\rightarrow\mathbb{R}^m$, $m\leq n$. 

If $n>m+1$, for each $i=0,\dots,n-m-1$ choose numbers $s(i)\in(i,i+1]$ and suppose that
\begin{enumerate}[(a)]
\item For $i=0$, $$\mathrm{ap} \limsup_{y\rightarrow x}{\vert f(y)-f(x)\vert\over \vert y-x\vert^{s(0)}}<+\infty\;\;\textrm{ for all } x\in\mathbb{R}^n\setminus N_0,\;\text{where}\;N_0\;\text{is countable}.$$
\item For $i=1$, $$\mathrm{ap} \limsup_{y\rightarrow x}{\vert f(y)-f(x)-F_1(x)(y-x)\vert\over \vert y-x\vert^{s(1)}}<+\infty\;\;\text{for}\; \mathcal{H}^{s(0)m}-\text{almost every}\; x\in \mathbb{R}^n.$$
\item For each $i=2,\dots,n-m-1$,
$$\mathrm{ap} \limsup_{y\rightarrow x}{\left|f(y)-f(x)-\cdots-{F_i(x)\over i!}(y-x)^{i}\right|\over \vert y-x\vert^{s(i)}}<+\infty\;\;\text{for}\; \mathcal{H}^{s(i-1)+m-1}-\text{almost every}\; x\in \mathbb{R}^n.$$
(If $n=m+2$ we do not have this condition).
\item  $$\mathrm{ap} \limsup_{y\rightarrow x}{\left| f(y)-f(x)-\cdots-{F_{n-m}(x)\over (n-m)!}(y-x)^{n-m}\right|\over \vert y-x\vert^{n-m+1}}<+\infty\;\;\text{for}\; \mathcal{H}^{s(n-m-1)+m-1}-\text{almost every}\; x\in \mathbb{R}^n.$$
\end{enumerate}

If $n=m+1$, after choosing $s(0)\in (0,1]$, suppose only $(a)$ and $(d)$, where the exceptional set in $(d)$ must be $\mathcal{H}^{s(n-m-1)m}=\mathcal{H}^{s(0)m}-$null.

If $n=m$ suppose only that condition $(a)$ with $s(0)=1$ holds everywhere except perhaps on a countable set $N_0$.

Then  $\mathcal{L}^m(f(C_f))=0$.
\end{thm}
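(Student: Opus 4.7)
The plan is to imitate the proof of Theorem \ref{i} almost verbatim, replacing Lemma \ref{u} by Lemma \ref{uu} at each intermediate step (so as to produce local $C^{k,t}$ agreements rather than $C^{k,1}$ ones), and replacing the use of Norton's theorem \ref{nor}(ii) by Norton's theorem \ref{nor}(i). The induction descends from the highest-order condition (d) to the lowest. I shall describe the main case $n > m + 1$; the cases $n = m+1$ and $n = m$ are degenerate versions of the same scheme. Before the induction, one upgrades condition (a) exactly as in Theorem \ref{i}: the Liu--Tai decomposition argument from Lemma \ref{u} applied to (a) yields $\mathbb{R}^n = \bigcup_{j} B_{0,j} \cup N_0$ with each $f|_{B_{0,j}}$ locally $s(0)$-H\"older. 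Standard properties of H\"older maps then imply that $f$ sends $\mathcal{H}^{s(0)m}$-null subsets of $\bigcup_j B_{0,j}$ to $\mathcal{L}^m$-null sets, and $N_0$ is countable.

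\textbf{Highest-order step.} Condition (d) together with Lemma \ref{u} (taken with $k = n-m+1$) gives $\mathbb{R}^n = \bigcup_{j} B_{n-m,j} \cup N'_{n-m}$, where $N'_{n-m}$ is the $\mathcal{H}^{s(n-m-1)+m-1}$-null exceptional set from (d), and each $g_j \in C^{n-m,1}$ agrees with $f$ on $B_{n-m,j}$ with $Dg_j = F_1$ there. Bates's theorem \ref{bates} yields $\mathcal{L}^m(f(C_f \cap B_{n-m,j})) = 0$ for every $j$, reducing the problem to showing $\mathcal{L}^m(f(C_f \cap N'_{n-m})) = 0$.

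\textbf{Intermediate induction.} For $i$ decreasing from $n-m-1$ to $1$, I would apply Lemma \ref{uu} with $(k,t) = (i+1,\, s(i)-i)$ to the hypothesis for index $i$ (i.e.\ (c) for $2\leq i\leq n-m-1$, or (b) for $i=1$), obtaining a decomposition $\mathbb{R}^n = \bigcup_{j} B_{i,j} \cup N'_i$ with $N'_i$ the corresponding exceptional set, and functions $g_j \in C^{i,\,s(i)-i}$ such that $f = g_j$ and $F_1 = Dg_j$ on $B_{i,j}$. Norton's theorem \ref{nor}(i) applied to $g_j$ gives the $(N_0)$-property with respect to $\mathcal{H}^{i + (s(i)-i) + m - 1} = \mathcal{H}^{s(i)+m-1}$. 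Since the next-level exceptional set $N'_{i+1}$ is $\mathcal{H}^{s(i)+m-1}$-null by the hypothesis for index $i+1$, and $C_f \cap B_{i,j} \subseteq C_{g_j}$, I conclude that $\mathcal{L}^m(f(C_f \cap N'_{i+1} \cap B_{i,j})) = 0$ for every $j$. Summing over $j$ and using the decomposition $N'_{i+1}\subseteq \bigcup_j B_{i,j}\cup N'_i$ reduces the task to proving $\mathcal{L}^m(f(C_f \cap N'_i)) = 0$, which is exactly the task at the next inductive step.

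\textbf{Final step and obstacles.} After $n-m$ iterations, the problem is reduced to $\mathcal{L}^m(f(C_f \cap N'_1)) = 0$. Since $N'_1$ is $\mathcal{H}^{s(0)m}$-null by condition (b), this follows from the H\"older mapping property established at the outset (combined with $\mathcal{L}^m(f(N_0)) = 0$, trivial because $N_0$ is countable). The boundary cases are standard: for $n = m + 1$ only steps 0 and $n-m = 1$ appear, and for $n=m$ Liu--Tai's theorem directly reduces the problem to the classical Morse--Sard theorem for $C^1$ maps, exactly as in the corresponding paragraph of the proof of Theorem \ref{i}. There is no genuine obstacle beyond bookkeeping; the only thing one must check with care is the arithmetic identity $i + (s(i)-i) + m - 1 = s(i) + m - 1$, which shows that the H\"older regularity produced by Lemma \ref{uu} at level $i$ is exactly what Norton \ref{nor}(i) needs in order to dispose of the $\mathcal{H}^{s(i)+m-1}$-null exceptional set appearing at level $i+1$.
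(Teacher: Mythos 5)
Your proposal is correct and takes essentially the same route as the paper's own proof: the same H\"older-type upgrade of condition (a), Lemma \ref{u} plus Bates's theorem at the top level, and a descending induction using Lemma \ref{uu} with $k-1+t=s(i)$ together with Norton's theorem \ref{nor}(i) at exponent $s(i)+m-1$, ending with the $\mathcal{H}^{s(0)m}$-null set disposed of by (a). (A cosmetic remark only: the proof of Theorem \ref{i} already uses Norton \ref{nor}(i) rather than (ii), so nothing is being ``replaced'' there; this does not affect your argument.)
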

\begin{proof}
The case $n=m$ is exactly the same as in Theorem \ref{i}. Suppose then $n>m$.

We will first see that $(a)$ implies that $\mathcal{L}^{m}(f(A))=0$ for every set $A\subseteq \mathbb{R}^n$ with $\mathcal{H}^{s(0)m}(A)=0$. This will be in harmony with our definition of critical set of points (recall that $(b)$ implies that $F_1$ is defined up to a set of $\mathcal{H}^{s(0)m}-$measure zero).\\
Again we employ arguments similar to previous proofs. We take the same decomposition of $\mathbb{R}^n$ as in Lemma \ref{u}, except that in this case we set
$$W_{j}(x;r)=B(x,r)\setminus\left\lbrace y\in\mathbb{R}^n:\,\vert f(y)-f(x)\vert\leq j\vert y-x\vert^{s(0)}\right\rbrace ,\;\; x\in\mathbb{R}^n,\;r>0,\;j\in\mathbb{N}.$$
Hence $\mathbb{R}^n=\bigcup^{\infty}_{j=1}B_j$ and for $j\in\mathbb{N}$ we consider two different points $x,y\in B_j$ with $\vert y-x\vert\leq {1\over j}$, $r=\vert y-x\vert$. Using that $\mathcal{L}^n(S(x,y;r,j))>0$ we can take $z\in S(x,y;r,j)$ and then
$$\vert f(y)-f(x)\vert\leq \vert f(y)-f(z)\vert +\vert f(z)-f(x)\vert \leq j\vert z-y\vert^{s(0)} +j\vert z-x\vert^{s(0)}\leq 2j\vert y-x\vert^{s(0)}.$$
So we obtain
$$\vert f(y)-f(x)\vert^m\leq 2j\vert y-x\vert^{s(0)m}.$$
From this it can be automatically checked that $\mathcal{L}^m(f(A))=0$ for all $A\subseteq \mathbb{R}^n$ with $\mathcal{H}^{s(0)m}(A)=0$.

$\blacktriangleright$ Step 1: Condition $(d)$ allows us to use  Lemma \ref{u} and find a decomposition
$$\mathbb{R}^n=\bigcup^{\infty}_{j=1}B_j\cup N_{n-m},\;\;\mathcal{H}^{s(n-m-1)+m-1}(N_{n-m})=0,$$
such that for each $j\in\mathbb{N}$ there is a function $g_j\in C^{n-m,1}$ with $$B_j\subseteq\left\lbrace x\in\mathbb{R}^n:\, f(x)=g_j(x),\, F_1(x)=Dg_j(x)\right\rbrace .$$ Hence by using Bates's result (Theorem \ref{bates}) we have that for every $j\in\mathbb{N}$,
$$\mathcal{L}^m(f(C_f\cap B_j))=\mathcal{L}^m(f|_{B_j}(C_f\cap B_j))=\mathcal{L}^m(g_j(C_{g_j}\cap B_j))=0.$$
So we have reduced our problem to prove that all sets $E\subseteq C_f$ with $\mathcal{H}^{s(n-m-1)+m-1}-$measure zero satisfy $\mathcal{L}^m(f(E))=0$.\\

$\blacktriangleright$ Step 2: We work with the condition $(c)$ for the case $i=n-m-1$. We apply  Lemma \ref{uu} for the case $k-1+t=s(n-m-1)$ and $\mu=\mathcal{H}^{s(n-m-2)+m-1}$ to find a decomposition
$$\mathbb{R}^n=\bigcup^{\infty}_{j=1}B_j\cup N_{n-m-1},\;\;\mathcal{H}^{s(n-m-2)+m-1}(N_{n-m-1})=0,$$
where for each $j\in\mathbb{N}$ there is a function $g_j\in C^{n-m-1,s(n-m-1)-n+m+1}$ with 
$$B_j\subseteq\left\lbrace x\in\mathbb{R}^n:\, f(x)=g_j(x),\, F_1(x)=Dg_j(x)\right\rbrace .$$
We write a given set $E\subseteq C_f$, $\mathcal{H}^{s(n-m-1)+m-1}(E)=0$ as
$$E=\bigcup^{\infty}_{j=1}(E\cap B_j)\cup (N\cap E)$$
where $\mathcal{H}^{s(n-m-2)+m-1}(N)=0$. Now we use Norton's result (Theorem \ref{nor} $(i)$) and for every $j\in\mathbb{N}$,
$$\mathcal{L}^m(f(E\cap B_j))=\mathcal{L}^m(f|_{B_j}(E\cap B_j))=\mathcal{L}^m(g_j(C_{g_j}\cap E\cap B_j))=0.$$
Therefore we must only consider sets $E\subseteq C_f$ such that $\mathcal{H}^{s(n-m-2)+m-1}(E)=0$ and check that $\mathcal{L}^m(f(E))=0$.\\

$\blacktriangleright$ Final step: Reasoning in the same way for the cases $i=n-m-2,\dots,i=1$, we arrive to the conclusion that it is enough to prove that sets $E\subseteq C_f$ with $\mathcal{H}^{s(0)m}-$measure zero satisfy $\mathcal{L}^m(f(E))=0$. But this follows from $(a)$, as we have already seen.
\end{proof}
\begin{rem}
{\em If we choose $s(i)=i+1$ for each $i=0,\dots,n-m-1$ we get exactly Theorem \ref{i} in the particular case that $\mathcal{H}^{i+m-1}(N_i)=0$, $i=1,\dots,n-m$.}
\end{rem}
The difference between this theorem and the previous one (for the case that $\mathcal{H}^{i+m-1}(N_i)=0$, $i=1,\dots,n-m$) is that we are able to lower the exponents in the denominator. However we must ask in return that the sets where these properties hold are bigger than in Theorem \ref{i}. So we can say that  Theorem \ref{o} generalizes a particular Theorem \ref{i}, but a selection of numbers $s(j)$ make it nor stronger, neither weaker than any other possible choice.

It is a natural question to ask whether or not we can change our exceptional sets $N_i$ ($i=1,\dots,n-m$) to be $(i+m-1)-$sigmafinite. This is the purpose of our next main result, in which we must work with the stronger notion of approximate differentiablity instead of the property of having approximate $(k-1)-$Taylor polynomials. The result will generalize Theorem \ref{i} but in addition we will have to require that $f$ and its differential coefficients $f_{\alpha}$ are Borel functions in order to use Lemma \ref{uuu}.

\begin{thm}\label{p}
Let $f:\mathbb{R}^n\rightarrow\mathbb{R}^m$ be a Borel function, $m\leq n$. Suppose that 
\begin{enumerate}[(a)]
\item $\mathrm{ap} \limsup_{y\rightarrow x}{\vert f(y)-f(x)\vert\over \vert y-x\vert}<+\infty\;\;\text{for all} \;x\in\mathbb{R}^n\setminus N_0,$ where $N_0$ is a $0-$sigmafinite set (a countable set).
\item $f$ is approximately differentiable of order $1$ for all $x\in\mathbb{R}\setminus N_1$, where $N_1$ is $\mathcal{H}^m-$null.
\item For each $i=2,\dots,n-m$, $f$ is approximately differentiable of order $i$ for all $x\in\mathbb{R}\setminus N_i$, where $N_i$ is $(i+m-2)-$sigmafinite.
(If $n\leq m+1$ we do not have this condition).
\item $f$ has an approximate $(n-m)-$Taylor polynomial for all $x\in\mathbb{R}\setminus N_{n-m+1}$, where $N_{n-m+1}$ is $(n-1)-$sigmafinite.\footnote{Observe that if $n=m$ we only have $(a)$ (however this implies $(b)$ using Liu-Tai's result \cite[Theorem 1]{Liu}) and if $n=m+1$ we only have $(a)$, $(b)$ and $(d)$.}
\end{enumerate}
Suppose also that the coefficients $f_{\alpha}(x)$, $\vert\alpha\vert \leq n-m$, are Borel functions.\\
Then $\mathcal{L}^m(f(C_f))=0$.
\end{thm}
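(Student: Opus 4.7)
The plan is to iteratively peel off pieces of $C_f$ whose image under $f$ has $\mathcal{L}^m$-measure zero, processing the exceptional sets $N_{n-m+1}, N_{n-m},\ldots, N_1, N_0$ in decreasing order of Hausdorff dimension. At each stage the argument alternates between two tools: Lemma~\ref{uuu}, which extracts $C^k$ pieces out of a Borel set of finite $\mathcal{H}^s$-measure and pairs with Norton's theorem~(ii) (the $(k+m-1)$-sigmafinite statement), and Lemma~\ref{u}, which extracts $C^{k-1,1}$ pieces and pairs with Norton's theorem~(i) (the $\mathcal{H}^{k+t+m-1}$-null statement). These tools interlock: the first handles the sigmafinite residuals, while the Egorov leftover it produces (only $\mathcal{H}^s$-null) is cleaned up by the second before the Hausdorff dimension is dropped by one.

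For the initial step, I would apply Lemma~\ref{u} with $k=n-m+1$ to condition~(d): off $N_{n-m+1}$ one obtains $C^{n-m,1}$ pieces $B_j^{(1)}$ whose critical images vanish by Bates's theorem (Theorem~\ref{bates}). This reduces the problem to studying $R_1\subseteq N_{n-m+1}$, an $(n-1)$-sigmafinite set.

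For the inductive double step, for $k=1,\ldots,n-m$ I would assume the problem has been reduced to some $R_{2k-1}$ contained in an $(n-k)$-sigmafinite set. Using Borel regularity of $\mathcal{H}^{n-k}$, cover it by Borel sets $E_l$ of finite $\mathcal{H}^{n-k}$-measure and apply Lemma~\ref{uuu} to each $E_l$ with order $n-m-k+1$ and dimension $n-k$; this is legitimate because condition~(c) at index $i=n-m-k+1$ (or~(b) when $k=n-m$) gives approximate differentiability of that order off $N_i$, a set that is $(n-k-1)$-sigmafinite and hence $\mathcal{H}^{n-k}$-null. The resulting $C^{n-m-k+1}$ pieces are handled by Norton~(ii) (which applies because $(n-m-k+1)+m-1=n-k$), leaving an Egorov residual $R_{2k}$ that is $\mathcal{H}^{n-k}$-null. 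When $k\leq n-m-1$, I would then apply Lemma~\ref{u} to the same condition at index $i=n-m-k+1$, producing $C^{n-m-k,1}$ extensions off $N_i$; Norton~(i), with exponent $(n-m-k)+1+m-1=n-k$, kills the image of $R_{2k}\cap B_j^{(2k+1)}$ and leaves $R_{2k+1}:=R_{2k}\cap N_{n-m-k+1}$, which is $(n-k-1)$-sigmafinite and ready for the next round.

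After the $k=n-m$ round we are left with $R_{2(n-m)}$, an $\mathcal{H}^m$-null set. The final step invokes the Liu--Tai Lipschitz decomposition of condition~(a) (as used at the start of the proof of Theorem~\ref{i}): $\mathbb{R}^n=\bigcup_j B_{0,j}\cup N_0$ with $f|_{B_{0,j}}$ locally Lipschitz. Since Lipschitz maps send $\mathcal{H}^m$-null sets to $\mathcal{H}^m$-null sets, and $N_0$ is countable, $f(R_{2(n-m)})$ is $\mathcal{L}^m$-null, completing the reduction. The boundary cases $n=m$ and $n=m+1$ (in which condition~(c) is vacuous) collapse this scheme and can be disposed of exactly as in the $n=m$ paragraph and the footnote of Theorem~\ref{i}. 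The main delicate point I expect is the bookkeeping: ensuring that at each stage the sigmafinite dimension, the order of differentiability used, and the Borel-measurability hypothesis required by Lemma~\ref{uuu} all mesh correctly, which is precisely where the standing assumption that $f$ and its differential coefficients $f_\alpha$ are Borel becomes indispensable.
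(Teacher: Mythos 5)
Your proposal is correct and follows essentially the same route as the paper's own proof: an initial reduction via Lemma \ref{u} and Bates's theorem, then an induction on the Hausdorff dimension in which each sigmafinite layer is treated by Lemma \ref{uuu} (using the Borel hypotheses) together with Norton's theorem (ii), the $\mathcal{H}^s$-null residuals are cleaned up by Lemma \ref{u} with Norton's theorem (i), and the final $\mathcal{H}^m$-null remainder is absorbed by the Lipschitz decomposition coming from condition (a). The only differences are presentational (explicit residuals $R_j$ and the Borel-regularity covering step, which the paper leaves implicit).
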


\begin{proof}
The case $n=m$ is exactly the same as in Theorem \ref{i} and Theorem \ref{o}.

Recall that condition $(a)$ gives us the Luzin's $N-$condition with respect to the Hausdorff measure $\mathcal{H}^m$. In particular the set of points where $C_f$ is not defined, which have $\mathcal{H}^m-$measure zero, have $\mathcal{L}^m-$null image.\\

$\blacktriangleright$ Step 1: Condition $(d)$ allows us to reduce our problem to showing that $\mathcal{L}^m(f(C_f\cap N_{n-m+1}))=0$ (here we use the same arguments as in Step 1 of Theorem \ref{i}).

Since the set $N_{n-m+1}$ is $(n-1)-$sigmafinte, by the subadditivity of the Lebesgue measure, without loss of generality we can assume that $\mathcal{H}^{n-1}(N_{n-m+1})<\infty$. Consequently we can focus on studying sets $A\subseteq C_f$ with $\mathcal{H}^{n-1}(A)<\infty$.\\

$\blacktriangleright$ Step 2: We now work with the condition $(c)$ but for the case $i=n-m$. 

Let us call $B=A\cap(\mathbb{R}^n\setminus N_{n-m})$. If we prove that $\mathcal{L}^m(f(B))=0$ it will only be needed to see if $\mathcal{L}^m(f(C_f\cap N_{n-m+1}\cap N_{n-m}))=0$, or what is the same, $\mathcal{L}^m(f(C_f\cap N_{n-m}))=0$ (note that we can suppose $N_{n-m}\subset N_{n-m+1}$).

We have that $f$ is approximately differentiable of order $n-m$ everywhere on $B$ and $\mathcal{H}^{n-1}(B)<\infty$. We can apply Lemma \ref{uuu} and write
\begin{equation}\label{pa}
B=\bigcup^{\infty}_{j=1}B_j\cup N,
\end{equation}
where for each $j\in\mathbb{N}$ there exists $g_j\in C^{n-m}(\mathbb{R}^n;\mathbb{R}^m)$ with $f_{\alpha}(x)=D^{\alpha}g_j(x)$ for all $x\in B_j$ $(\vert \alpha\vert \leq n-m)$, and $\mathcal{H}^{n-1}(N)=0$.

With the set $N$ we proceed as in Theorem \ref{i} when we had to deal with $\mathcal{H}^{n-1}-$null sets and we had to apply Lemma \ref{u} together with Norton's Theorem \ref{nor} $(i)$, and we conclude that $\mathcal{L}^m(f(N))=0$.

For the sets $B_j\subseteq C_f$ we use Norton's Theorem \ref{nor} $(ii)$ with $C^{n-m}$ regularity and we have
$$\mathcal{L}^m(f(B_j))=\mathcal{L}^m(g_j(B_{j}))=0.$$

Therefore, by the subadditivity of the Lebesgue measure and \eqref{pa}, our problem boils down to checking if $\mathcal{L}^m(f(C_f\cap N_{n-m}))=0$.\\

$\blacktriangleright$ Final step: Reasoning in the same way for the cases $i=n-m-2,\dots,i=1$, we inductively arrive to the conclusion that it is enough to prove that $\mathcal{L}^m(f(C_f\cap N_1))=0$, where $\mathcal{H}^m(N_1)=0$. But this is automatic by $(a)$.

\end{proof}

\section{Final considerations and examples}
A key point in the above arguments is obtaining a nice splitting of the space $\mathbb{R}^n$ into a countable union of sets (plus a small enough exceptional set) such that our function has enough regularity on each of those sets. Following the same strategy, there is another well known property that allows a similar decomposition.
\begin{defn}
{\em A measurable function $f:\mathbb{R}^n\rightarrow \mathbb{R}^m$ is said to have the Luzin property of order $k$ with respect to the measure $\mu$ if for every $\varepsilon>0$ there is a function $g\in C^k(\mathbb{R}^n;\mathbb{R}^m)$ such that $$\mu\left( \left\lbrace x\in\mathbb{R}^n:\,f(x)\neq g(x)\right\rbrace \right) <\varepsilon.$$}
\end{defn}
It is clear that for such a function there always exist decompositions of the form
$$\mathbb{R}^n=\bigcup^{\infty}_{j=1}B_j\cup N,$$
where for each $j\in\mathbb{N}$ there is a function $g_j\in C^{k}(\mathbb{R}^n;\mathbb{R}^m)$ with $f_{\alpha}(x)=D^{\alpha}g_j(x)$ for all $x\in B_j$ $(\vert \alpha\vert \leq k)$, and $\mu (N)=0$. Therefore, if, instead of approximate differentiability in our conditions of Theorem \ref{i}, Theorem \ref{o} and Theorem \ref{p}, we consider Luzin properties of the corresponding orders, with respect to the same Hausdorff measures, we may obtain the same conclusions.
For example the analogue of Theorem \ref{i} would be the following.

\begin{thm}\label{Luzin type MS theorem}
Let $f:\mathbb{R}^n\rightarrow\mathbb{R}^m$, $m< n$, be locally Lipschitz. Suppose that for each $i=2,\dots,n-m+1$, $f$ has the Luzin property of order $i$ with respect to the measure $\mathcal{H}^{i+m-2}$. Then we have $\mathcal{L}^m(f(C_f))=0$.
\end{thm}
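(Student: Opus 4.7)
The strategy is to mimic the inductive descent used in Theorems \ref{i} and \ref{p}, with the Luzin property substituting for approximate differentiability. From the observation immediately preceding the statement, for each $i\in\{2,\dots,n-m+1\}$ the Luzin property of order $i$ with respect to $\mathcal{H}^{i+m-2}$ yields a decomposition
$$\mathbb{R}^n=\bigcup_{j=1}^{\infty}B_{i,j}\cup N_i,\qquad \mathcal{H}^{i+m-2}(N_i)=0,$$
together with maps $g_{i,j}\in C^i(\mathbb{R}^n;\mathbb{R}^m)$ satisfying $f=g_{i,j}$ and $f_\alpha=D^\alpha g_{i,j}$ on $B_{i,j}$ for all $|\alpha|\leq i$; in particular $F_1\equiv Dg_{i,j}$ on $B_{i,j}$, so $C_f\cap B_{i,j}\subseteq C_{g_{i,j}}$. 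Moreover, since $f$ is locally Lipschitz, repeating Step 1 of the proof of Theorem \ref{i} shows that $f$ satisfies Luzin's $N$-property with respect to $\mathcal{H}^s$ for every $s\in(0,m]$; hence any $\mathcal{H}^m$-null set has $\mathcal{L}^m$-null image under $f$.

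I then peel off the exceptional sets one order at a time, starting at the top. For $i=n-m+1$ I write
$$C_f=\bigcup_{j}(C_f\cap B_{n-m+1,j})\cup (C_f\cap N_{n-m+1}).$$
On each $B_{n-m+1,j}$ the function $f$ coincides with $g_{n-m+1,j}\in C^{n-m+1}$, so the classical Morse--Sard theorem (equivalently Norton's Theorem \ref{nor}(ii) with $k=n-m+1$) gives $\mathcal{L}^m(f(C_f\cap B_{n-m+1,j}))=0$. The problem reduces to bounding the image of $C_f\cap N_{n-m+1}$, which is $\mathcal{H}^{n-1}$-null.

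I then iterate: assume the problem has been reduced to showing $\mathcal{L}^m(f(C_f\cap R))=0$ for some $R\subseteq N_{i+1}\cap\cdots\cap N_{n-m+1}$ that is $\mathcal{H}^{i+m-1}$-null. I apply the order-$i$ Luzin decomposition and split
$$C_f\cap R=\bigcup_{j}(C_f\cap R\cap B_{i,j})\cup(C_f\cap R\cap N_i).$$
On each $B_{i,j}$ the set $C_f\cap R\cap B_{i,j}$ is contained in $C_{g_{i,j}}$ and is $(i+m-1)$-sigmafinite (being $\mathcal{H}^{i+m-1}$-null); Norton's Theorem \ref{nor}(ii) applied to $g_{i,j}\in C^i$ then yields $\mathcal{L}^m(f(C_f\cap R\cap B_{i,j}))=0$, leaving $C_f\cap R\cap N_i$, which is $\mathcal{H}^{i+m-2}$-null and contained in $N_i$. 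After finitely many such iterations, at $i=2$ the residual set lies in $N_2$ and is $\mathcal{H}^m$-null, and the Lipschitz Luzin-$N$ property closes the argument.

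The core of the proof is thus a careful bookkeeping: at each level $i$ the Hausdorff exponent $i+m-1$ of the residual set from the previous step matches exactly the threshold $k+m-1$, with $k=i$, required to invoke Norton's result on a $C^i$ function. The chief technical subtlety is securing that the Luzin decomposition can be arranged so that $f_\alpha=D^\alpha g_{i,j}$ (and not merely $f=g_{i,j}$) on $B_{i,j}$, for this is what guarantees $C_f\cap B_{i,j}\subseteq C_{g_{i,j}}$; this follows by restricting to Lebesgue density points of the base sets, at which the approximate derivatives of $f$ must coincide with the derivatives of the smooth approximant, and absorbing the non-density fringes into the exceptional set, using once more the locally Lipschitz character of $f$ to keep their images under control.
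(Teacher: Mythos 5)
Your overall scheme is exactly the one the paper intends for this theorem: the Luzin decompositions play the role of Lemmas \ref{u} and \ref{uuu}, the classical Morse--Sard theorem replaces Bates's theorem at the top level $i=n-m+1$, Norton's Theorem \ref{nor}(ii) handles the intermediate levels, and the Luzin $N$-property of a locally Lipschitz map with respect to $\mathcal{H}^m$ closes the induction; your exponent bookkeeping (an $\mathcal{H}^{i+m-1}$-null residual set at level $i$, matching the threshold $k+m-1$ with $k=i$ for a $C^i$ function) is correct and coincides with the paper's sketch.

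There is, however, a genuine gap at precisely the point you single out as the chief subtlety, and your proposed repair does not work. Restricting $B_{i,j}=\{f=g_{i,j}\}$ to its Lebesgue density points does give $F_1=Dg_{i,j}$ there (wherever $F_1$ exists), but the discarded fringe is only known to be $\mathcal{L}^n$-null, and neither of your two ways of disposing of it is valid. It cannot be ``absorbed into the exceptional set'', because the induction requires the level-$i$ exceptional set to be $\mathcal{H}^{i+m-2}$-null (or at least $(i+m-2)$-sigmafinite), and an $\mathcal{L}^n$-null set need not be $\mathcal{H}^s$-sigmafinite for any $s<n$ (any null set of Hausdorff dimension larger than $s$ fails this). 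Nor can its image be controlled ``by the locally Lipschitz character of $f$'': local Lipschitzness only yields Luzin's $N$-property with respect to $\mathcal{H}^m$, whereas for $m<n$ a locally Lipschitz (indeed $C^1$, indeed $C^{n-1,t}$) map can send an $\mathcal{L}^n$-null subset of its critical set onto a set of positive $\mathcal{L}^m$-measure --- Whitney's example maps an $\mathcal{L}^2$-null arc of critical points onto an interval, and item (3) of Section 4 exhibits analogous higher-order examples on quasi-arcs. So $\mathcal{L}^m\bigl(f(C_f\cap(\text{fringe}))\bigr)=0$ does not follow from your argument. What the proof actually needs (and all it needs) is a decomposition in which the first-order coefficients agree, $F_1=Dg_{i,j}$ on $B_{i,j}$, up to a leftover set which is either $\mathcal{H}^{i+m-2}$-small or otherwise shown to have $\mathcal{L}^m$-null image; equivalently, a separate argument for the set $\{x:\ f(x)=g_{i,j}(x),\ F_1(x)\neq Dg_{i,j}(x)\}$. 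To be fair, the paper itself only asserts such a decomposition (with $f_\alpha=D^\alpha g_j$ on $B_j$ and $\mu$-null remainder) as ``clear'' in the paragraph preceding the statement and gives no proof of it, so this is a point you would have to supply to make the argument complete rather than a divergence from the paper's route.
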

For the proof we just mention that local Lipschitzness gives us the Luzin's $N-$property with respect to the measure $\mathcal{H}^m$, and that we must use the classical Morse-Sard Theorem instead of Bates's result. 

However, we cannot deduce Theorems \ref{i}, \ref{o} and \ref{p} from Theorem \ref{Luzin type MS theorem}, because, to the best of our knowledge, the problem whether an $\mathcal{H}^{s}$-a.e. approximately differentiable function of order $j$ must have the Luzin property of order $j$ (or a $C^{j-1,1}$ Luzin type property) with respect to the measure $\mathcal{H}^{s}_{\infty}$ or $\mathcal{H}^s$ is open. The proof of \cite{Liu} cannot be adapted to the measures $\mathcal{H}^{s}_{\infty}$ or $\mathcal{H}^s$ ($s<n$).\\

We will finally comment on three examples that illustrate how Theorem \ref{i} covers functions for which none of the previous Morse-Sard type results that exist in the literature can be applied to, and also how Theorem \ref{i} and Theorem \ref{p} are sharp in the following sense: for each $t\in (0, 1]$ we can always find a function $f:\mathbb{R}^n\rightarrow\mathbb{R}$ of class $C^{n-1}$ which has an approximate $(n-1)-$Taylor polynomial everywhere on $\mathbb{R}^n$ except on a set $N$ of Hausdorff dimension $n-1+t$, but which does not satisfy the Morse-Sard theorem.

\begin{enumerate}
\item We first note that Theorem \ref{i} is not weaker, nor stronger than the recent Bourgain-Korobkov-Kristensen generalizations \cite{Bo2,KK1,KK2} of the Morse-Sard theorem in the case of real-valued functions for the spaces $W^{n,1}_{loc}(\mathbb{R}^n;\mathbb{R})$ and $BV_{n,loc}(\mathbb{R}^n)$. The following example, taken from \cite[pag. 18]{Az1},
$$f(x,y)=\left\{ \begin{array}{lc} 
x^4\sin\left( {1\over x^2}\right)  &  \text{if}\; x\neq 0,  \\ \\  0 &\mbox{if}\; x=0,       
                                                \end{array}
\right. $$
shows that there are functions $f:\mathbb{R}^2\rightarrow\mathbb{R}$ satisfying the conditions of Theorem \ref{i} and such that $f\notin BV_{2}(\mathbb{R}^2)$. 

On the other hand there are functions $f:[0,1]\rightarrow\R$ which are in $W^{1,1}(\R;\R)$ (and therefore have the Morse-Sard property) and which do not satisfy assumption $(a)$ of Theorem \ref{i} because they 
$$
\lim_{y\to x}\frac{f(y)-f(x)}{y-x}=\infty
$$
for all $x$ in an uncountable set of measure zero. Those examples are well known, but we have not found any appropriate reference, so let us briefly recall a possible construction. Let us take the ternary Cantor set $C$ on $[0,1]$. For each $i=1,2,\dots$ choose a sequence of closed and disjoint intervals $\{I_{ij}\}_{j\in\N}$ such that $C\subseteq \bigcup^{\infty}_{j=1}I_{ij}$ and $\sum^{\infty}_{j=1}\ell(I_{ij})\leq (2/3)^i$. Define then
$$ f(x):=\int^{x}_{0}\sum^{\infty}_{i,j=1}\mathcal{X}_{I_{ij}}(t)\,dt=\sum^{\infty}_{i,j=1}\ell(I_{ij}\cap [0,x])\;,\;\;x\in [0,1].$$
Since the function $\sum^{\infty}_{i,j=1}\mathcal{X}_{I_{ij}}(t)$ is in $L^{1}[0,1]$, it is clear that $f$ is absolutely continuous. On the other hand, it is not difficult to check that $\lim_{h\rightarrow 0}{f(x+h)-f(x)\over h}=+\infty$ for every $x\in C$.

\item It is also worth noting that Theorem \ref{i} extends (\cite[Theorem 3.7]{Az2}): if $f\in C^{n-m}(\mathbb{R}^n;\mathbb{R}^m)$ is such that for all $x\in\mathbb{R}^n$ it has an approximate $(n-m-1)-$Taylor polynomial at $x$ then $f$ has the Morse-Sard property.
It is enough to take a function $f$ in the conditions of \cite[Theorem 3.7]{Az2} and for example change its value in all the points with rational coordinates (call this set $N$). This new function stops being of class $C^{n-m}$ but it still satisfies the assumptions of Theorem \ref{i}. Recall that we require condition $(a)$ in Theorem \ref{i} to hold everywhere except perhaps on a countable set, and that $\mathcal{H}^s(N)=0$ for all $s>0$.

\item We begin with some definitions. A subset $\gamma$ of $\mathbb{R}^n$ is an {\em arc} if it is the image of a continuous injection defined on the closed unit interval. For $x,y\in \gamma$, let $\gamma(x,y)$ denote the subarc of $\gamma$ lying between $x$ and $y$. An arc $\gamma$ is a {\em quasi-arc} if there is some $K>0$ such that for every $x,y\in\gamma$, $\gamma (x,y)$ is contained in some ball of radius $K|x-y|$.
A function $f$ is said to be critical on a set $A$ if $A\subseteq C_f$.
Let $A\subseteq \mathbb{R}^n$, $k\geq 1$ an integer number and $t\in (0,1)$. We say that $A$ is $(k+t)-${\em critical} if there exists a real-valued function $f\in C^{k,t}$ which is critical but not constant on $A$.\\
To provide a wide range of examples of functions $f:\mathbb{R}^n\rightarrow\mathbb{R}$ of class $C^{n-1}$ that have approximate $(n-1)-$Taylor polynomials everywhere on $\mathbb{R}^n$ except on at most a set $N$ with $\mathcal{H}^{n-1+t}(N)>0$, but that do not satisfy the Morse-Sard property, we state the following theorem from Norton (\cite[Theorem 2]{No2}).

\begin{thm}[Norton]
Let $k\geq 1$ be an integer number and $t\in (0,1)$. If $\gamma$ is a quasi-arc with $\mathcal{H}^{k+t}(\gamma)>0$, then $\gamma$ is $(k+t)-$critical.
\end{thm}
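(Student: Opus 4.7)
The plan is to construct the desired $f \in C^{k,t}(\R^n;\R)$ by prescribing a jet of order $k$ on $\gamma$ whose zeroth order part is a non-constant $(k+t)$-H\"older function and whose higher order parts vanish, and then extending to all of $\R^n$ via the uniform Whitney extension theorem (Theorem \ref{whi}).

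First I would invoke Frostman's lemma: since $\mathcal{H}^{k+t}(\gamma)>0$, there exists a nontrivial finite positive Borel measure $\mu$, supported on $\gamma$, such that $\mu(B(x,r))\leq r^{k+t}$ for all $x\in\R^n$ and $r>0$. Fix a continuous injective parametrization $\sigma:[0,1]\to\gamma$ and, for $x=\sigma(s)\in\gamma$, define
$$f_0(x) \, :=\, \mu\bigl(\sigma([0,s])\bigr).$$
Then $f_0$ is bounded and non-constant on $\gamma$ (because $\mu$ is nontrivial). For any $x=\sigma(s)$, $y=\sigma(s')$ with $s\leq s'$ we have $|f_0(y)-f_0(x)|=\mu(\gamma(x,y))$, and by the quasi-arc condition there is a point $z$ with $\gamma(x,y)\subseteq B(z,K|x-y|)$, so
$$|f_0(y)-f_0(x)|\,\leq\,\mu(B(z,K|x-y|))\,\leq\,K^{k+t}|x-y|^{k+t}.$$
Thus $f_0$ is a non-constant $(k+t)$-H\"older function on $\gamma$.

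Next I would set $f_\alpha\equiv 0$ on $\gamma$ for every multi-index with $1\leq|\alpha|\leq k$. With this choice the Whitney remainders of \eqref{derivadas de whitney} become $R_0(x,y)=f_0(x)-f_0(y)$ and $R_\alpha(x,y)=0$ for $|\alpha|\geq 1$. Hence, taking $\omega(s)=s^t$ and $M:=\max\{K^{k+t},\,\sup_\gamma |f_0|\}$, the hypotheses \eqref{condicion whitney} hold for all multi-indices $|\alpha|\leq k$. Theorem \ref{whi} then yields an extension $F\in C^{k,t}(\R^n;\R)$ with $D^\alpha F=f_\alpha$ on $\gamma$ for all $|\alpha|\leq k$; in particular $DF\equiv 0$ on $\gamma$, so $\gamma\subseteq C_F$, while $F|_\gamma=f_0$ is not constant. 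This shows $\gamma$ is $(k+t)$-critical.

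The main obstacle I expect is the geometric step connecting the quasi-arc property to the Frostman bound: one needs the fact that the subarc between two points sits inside a ball whose radius is controlled by the chord length, which is precisely what the definition of quasi-arc (with constant $K$) provides. Once this is in place, the construction reduces to Frostman's lemma and a direct check of the Whitney hypothesis, both of which are standard; the novelty lies entirely in the idea of pushing the measure forward along the arc to produce a H\"older primitive that will play the role of $f_0$.
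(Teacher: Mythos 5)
This statement is quoted from Norton's paper and is not proved in the present article (it is cited as \cite[Theorem 2]{No2}), so there is no in-paper proof to compare against; your argument is correct and is in essence Norton's original one. Indeed, the Frostman measure $\mu$ on $\gamma$ pushed along the parametrization yields a bounded, non-constant $f_0$ with $|f_0(y)-f_0(x)|\leq\mu(\gamma(x,y))\leq (K|x-y|)^{k+t}$ by the quasi-arc property (note $\mu$ has no atoms, so the increment of $f_0$ equals the measure of the subarc), and applying Theorem \ref{whi} with the jet $(f_0,0,\dots,0)$ and $\omega(s)=s^{t}$ gives $F\in C^{k,t}(\R^n;\R)$ with $DF=0$ on $\gamma$ and $F|_{\gamma}=f_0$ non-constant, i.e.\ $\gamma$ is $(k+t)$-critical.
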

In the same paper Norton noted that such arcs ``are in plentiful supply (e.g. as Julia sets for certain rational maps in the plane)''. Hence, building such a quasi-arc $\gamma$ with $k=n-1$, $t\in (0,1)$ and $\mathcal{H}^{n-1+t}(\gamma)>0$, we can get a function $f:\mathbb{R}^n\rightarrow\mathbb{R}$ that is of class $C^{n-1,t}$ and does not satisfy the Morse-Sard theorem.
Note that we have
$$\text{ap}\limsup_{y\rightarrow x}{\left|f(y)-f(x)-\cdots -{D^{n-1}f(x)\over (n-1)!}(y-x)^{n-1}\right|\over |y-x|^n}<+\infty$$
for all $x\in\mathbb{R}^n\setminus \gamma$, since the construction of $f$ comes from an application of the Whitney extension theorem and consequently $f\in C^{\infty}(\mathbb{R}^n\setminus\gamma;\mathbb{R})$.\\ 
\end{enumerate}




\begin{thebibliography}{99}

\bibitem{Anz} G. Anzellotti and R. Serapioni, {\em $C^k-$rectifiable sets}, J. Reine Angew. Math. 453: 1-20, 1994.

\bibitem{Az1} D. Azagra, J. Ferrera, J. G\'omez-Gil, {\em The Morse-Sard theorem reviseted}, preprint, arXiv:1511.05822v1, 18 Nov 2015.

\bibitem{Az2} D. Azagra, J. Ferrera and J. G\'omez-Gil, {\em Nonsmooth Morse-Sard theorems}, preprint, 2016.

\bibitem{Ba} S. M. Bates, {\em Toward a precise smoothness hypothesis in Sard's theorem}, Proc. Amer. Math. Soc. 117 (1993), no. 1, 279-283. 

\bibitem{Bo1} J. Bourgain, M. V. Korobkov and J. Kristensen, {\em On the Morse-Sard property and level sets of Sobolev and BV functions}, Rev. Mat. Iberoam. 29 (2013), no. 1, 1-23.

\bibitem{Bo2} J. Bourgain, M. V. Korobkov and J. Kristensen, {\em On the Morse-Sard property and level sets of $W^{n,1}$ Sobolev functions on $\mathbb{R}^n$}, J. Reine Angew. Math. 700 (2015), 93-112.

\bibitem{Ca} S. Campanato, {\em Propiet\`a di una famiglia di spazi funzionali}, Ann. Scuola Norm. Sup. Pisa 18 (1964), 137-160.

\bibitem{Co} D. L. Cohn, {\em Measure Theory}, Birkh\"auser Advanced Texts Basler Lehrb\"ucher, Second Edition, 2013

\bibitem{Pa} L. De Pascale, {\em The Morse-Sard theorem in Sobolev sapces}, Indiana Univ. Math. J. 50 (2001), 1371-1386.

\bibitem{EvGa} L. C. Evans and R. F. Gariepy, {\em Measure theory and fine properties of functions}, Studies in Advanced Mathematics. CRC Press, Boca Raton, FL, 1992.

\bibitem{Fi} A. Figalli, {\em A simple proof of the Morse-Sard theorem in Sobolev spaces}, Proc.Amer. Math. Soc. 136 (2008), no. 10, 3675-3681.

\bibitem{Glaeser}
G. Glaeser, {\em Etudes de quelques alg\`ebres tayloriennes}, J. d'Analyse 6 (1958), 1-124.

\bibitem{Liu} F.-C. Liu and W.-S. Tai, {\em Approximate Taylor polynomials and differentiation of functions}, Topol. Methods Nonlinear Anal. 3 (1994), no. 1, 189-196.

\bibitem{Hkk} P. Hajlasz, M.V. Korobkov and J. Kristensen, {\em A bridge between Dubovitski\v\i{}-Federer theorems and the coarea formula}, Journal of Functional Analysis, 2017, vol 272, no 3, p. 1265-1295.

\bibitem{HaZi} P. Hajlasz and S. Zimmerman, {\em Dubovitski\v\i{}-Sard theorem for Sobolev mappings}, preprint, arXiv:1506.00025.

\bibitem{KK1} M.V. Korobkov and J. Kristensen, {\em On the Morse-Sard theorem for the sharp case of Sobolev mappigs}, Indiana Univ. Math. J. 63 (2014), no. 6, 1703-1724.

\bibitem{KK2} M.V. Korobkov and J. Kristensen, {\em The Trace theorem, the Luzin N-and More-Sard properties for the sharp case of Sobolev-Lorentz mappings}, submitted, see also Report no. OxPDE-15/07\\
https://www. maths. ox. ac. uk/system/files/attachments/OxPDE. 

\bibitem{Mal} B. Malgrange, {\em Ideals of differentiable functions}, Tata Institute of Fundamental Research (1966).

\bibitem{No} A. Norton, {\em A critical set with nonnull image has large Hausdorff dimension}, Trans. Amer. Math. Soc. 296 (1986), 367-276.

\bibitem{No2} A. Norton, {\em Functions not constant on fractal quasi-arcs of critical points}, Proc. Amer. Math. Soc. 106 (1989) no. 2, 397-405.

\bibitem{Mo} A. P. Morse, {\em The behavior of a function on its critical set}, Ann. of Math. 40 (1939), 62-70.

\bibitem{Sa} A. Sard, {\em The measure of the critical values of differentiable maps}, Bull. Amer. Math. Soc. 48 (1942), 883-890.

\bibitem{St} E.M. Stein, {\em Singular integrals and differentiability properties of functions}, Princeton University Press, Princeton, 1970.

\bibitem{Whitney1} H. Whitney, {\em A function not constant on a connected set of critical points}, Duke Math. J. 1 (1935), 514-517.

\bibitem{Whitney2} H. Whitney, {\em On totally differentiable and smooth functions}, Pacific J. Math. 1 (1951), 143-159.

\bibitem{Yom} Y. Yomdin, {\em The geometry of critical and near-critical values of differentiable mappings}, Math. Ann. 264 (1983), 495-515.

\bibitem{Zie2} W. P. Ziemer, {\em Weakly differentiable functions. Sobolev spaces and functions of bounded variation}, Graduate Texts in Mathematics, 120. Springer-Verlag, New York, 1989.

\end{thebibliography}
\end{document}